\newcommand\ReDeclareMathOperator{%
  \@ifstar{\def\rmo@s{m}\rmo@redeclare}{\def\rmo@s{o}\rmo@redeclare}%
}
\newcommand\rmo@redeclare[2]{%
  \begingroup \escapechar\m@ne\xdef\@gtempa{{\string#1}}\endgroup
  \expandafter\@ifundefined\@gtempa
     {\@latex@error{\noexpand#1undefined}\@ehc}%
     \relax
  \expandafter\rmo@declmathop\rmo@s{#1}{#2}}
\newcommand\rmo@declmathop[3]{%
  \DeclareRobustCommand{#2}{\qopname\newmcodes@#1{#3}}%
}
\setlist[itemize]{itemsep=-1mm}
\newtheorem{theorem}{Theorem}
\newtheorem{lemma}[theorem]{Lemma}
\newtheorem{lem}[theorem]{Lemma}
\newtheorem{prop}[theorem]{Proposition}
\newcommand{\vertiii}[1]{{\left\vert\kern-0.25ex\left\vert\kern-0.25ex\left\vert #1 
    \right\vert\kern-0.25ex\right\vert\kern-0.25ex\right\vert}}
\newcommand{\R}{\mathbb{R}}
\newcommand{\1}{\mathds{1}}
\newcommand{\e}{\epsilon}
\DeclareMathOperator{\Int}{Int}
\ReDeclareMathOperator{\Re}{Re}
\ReDeclareMathOperator{\Im}{Im}
\numberwithin{equation}{section}
\numberwithin{theorem}{section}
\def\beq{\begin{equation}}
\def\eeq{\end{equation}}
\Crefname{assumption}{Assumption}{Assumptions}
\Crefname{theorem}{Theorem}{Theorems}
\Crefname{lem}{Lemma}{Lemmas}
\Crefname{cor}{Corollary}{Corollaries}
\Crefname{prop}{Proposition}{Propositions}
\Crefname{theorem}{Theorem}{Theorems}
\Crefname{conjecture}{Conjecture}{Conjectures}
\begin{document}

\title{The reactive-telegraph equation and a related kinetic model}

\setlength\thanksmarkwidth{.5em}
\setlength\thanksmargin{-\thanksmarkwidth}

\author{
	Christopher Henderson\thanks{Corresponding author, Department of Mathematics, The University of Chicago, 5734 S.~University Avenue, Chicago, IL 60637, E-mail: \texttt{henderson@math.uchicago.edu}} 
\ and	
	Panagiotis E.~Souganidis\thanks{Department of Mathematics, The University of Chicago, 5734 S.~University Avenue, Chicago, IL 60637, E-mail: \texttt{souganidis@math.uchicago.edu}}
}

\maketitle
\begin{abstract}
\noindent We study  the long-range, long-time behavior of the reactive-telegraph equation and a related reactive-kinetic model.  The two problems  are equivalent in one spatial dimension.  We point out that the reactive-telegraph equation, meant to model a population density, does not preserve positivity in higher dimensions.  In view of this, in dimensions larger than one,  we consider a reactive-kinetic model and investigate the long-range, long-time limit of the solutions. We provide a general characterization of the speed of propagation and we compute it explicitly in one and two dimensions. We show that a phase transition between parabolic and hyperbolic behavior takes place only in one dimension. Finally, we investigate the hydrodynamic limit of the limiting problem.
\end{abstract}
%


\section{Introduction}

The Fisher-KPP equation
\[
	u_t = u_{xx} + u(1-u)
\]
is the classical model used to study the spread of a population in an environment. 
Being based on the heat equation, the Fisher-KPP equation exhibits infinite speed of propagation.  Indeed  the population density is non-zero everywhere for any positive time even if the initial data is compactly supported.  
\smallskip

One approach to remove this unphysical behavior is to look at the reactive-telegraph equation
\begin{equation}\label{eq:reactive_telegraph}
	\tau \rho_{tt} + (1 - \tau (1-2\rho)) \rho_t = \Delta \rho + \rho(1-\rho).
\end{equation}
This may be stated with more general non-linearities $F(\rho)$ in place of $\rho(1-\rho)$ and $F'(\rho)$ in place of $1-2\rho$.  In one dimension, this is equivalent, for $\rho=\frac{p^-+p^+}{2}$, to the kinetic system
\begin{equation}\label{eq:kinetic_system1}
\begin{cases}
	p_t^+ + \frac{1}{\sqrt \tau} p_x^+ = \frac{1}{2\tau} \left( p^- - p^+\right) + \frac{1}{2} \rho(1-\rho),\\
	p_t^- - \frac{1}{\sqrt \tau} p_x^- = \frac{1}{2\tau} \left( p^+ - p^-\right) + \frac{1}{2} \rho(1-\rho).
\end{cases}
\end{equation}
The solution $\rho$ is a population density, while $p^\pm$ represents the density of those individuals moving with velocity $\pm 1$.  The positive parameter $\tau$ is related to  the relaxation time, which depends  on  the mean run length between changes in velocity.  A small sample of the more applied literature addressing these models, including the interpretation of $\tau$ can be found in the works of Fedotov \cite{Fedotov}, Hillen and Othmer \cite{HillenOthmer}, Holmes \cite{Holmes}, Horsthemke \cite{Horsthemke}, Kac \cite{Kac}, M\'endez, Fort, and Farjas \cite{mendez1999speed}, and Ortega, Fort, and M\'endez \cite{ortega2004role}. 
\vskip.075in

We discuss, in particular the work in~\cite{Fedotov}, where the author proposed~\eqref{eq:reactive_telegraph} as a model for population dynamics in more than one dimension, deriving it from a general transport model with the flux given by convolution of the gradient with a kernel, and formally analyzed the long-range, long-time behavior of the solution  with arguments similar to those used to rigorously study the same problem for the Fisher-KPP equation.  However, to even write down this formal analysis it is necessary to know that the solutions to  \eqref{eq:reactive_telegraph} preserve the sign, that is if they start nonnegative, they remain nonnegative. 
\smallskip

Our first result is to show that, in general, such formal computations cannot be justified when $n\geq 2$.  Indeed, solutions to~\eqref{eq:reactive_telegraph} need not remain positive.  In other words, it is not possible to  rule out negative population densities, which are unphysical.
\begin{theorem}\label{thm:non_positive_telegraph}
Assume $n = 1$ and fix  $\rho_0 \in C^1(\R)\cap L^1(\R)$ with $0 \leq \rho_0 \leq 1$.  If  $\rho$ is the solution of ~\eqref{eq:reactive_telegraph} with initial data $\rho(\cdot,0) = \rho_0$ and $\rho_t(\cdot,0) = 0$, then $0 \leq \rho(x,t) \leq 2$ for all $(x,t) \in \R\times [0,\infty)$.
\smallskip

If $n\geq 2$, there exists $\rho_0 \in C^\infty(\R^n)$ with $0 \leq \rho_0 \leq 1$ and $(x_0,t_0) \in \R^n\times \R_+$  such that,  if $\rho$ solves  ~\eqref{eq:reactive_telegraph} with initial data $\rho(\cdot,0) = \rho_0$ and $\rho_t(\cdot,0)=0$,  then  $\rho(x_0, t_0) < 0$.
\end{theorem}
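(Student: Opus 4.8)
For $n=1$ I would use the stated equivalence with the kinetic system~\eqref{eq:kinetic_system1}, whose characteristic speeds $\pm\tau^{-1/2}$ are constant and whose Riemann invariants are $p^+$ and $p^-$. One checks that the reaction term $\tfrac12\rho(1-\rho)$ together with the relaxation terms points the flow inward on every face of the square $\{0\le p^+\le2,\ 0\le p^-\le2\}$: on $\{p^+=2\}$ one has $\rho=\tfrac12(2+p^-)\ge1$, so $\rho(1-\rho)\le0$ and $\tfrac1{2\tau}(p^--2)\le0$, hence $p^+$ cannot increase; on $\{p^+=0\}$ one has $\rho=\tfrac12p^-\in[0,1]$, so both terms are $\ge0$ and $p^+$ cannot decrease; the faces $\{p^-=0\}$, $\{p^-=2\}$ are symmetric. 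Once the Cauchy datum $(\rho_0,0)$ is realized by admissible kinetic data, this gives $0\le\rho=\tfrac12(p^++p^-)\le2$, the value $2$ being the largest overshoot past the stable state $\rho\equiv1$ allowed by this region. The one point requiring care is how the condition $\rho_t(\cdot,0)=0$ constrains the initial flux $\tfrac12(p^+-p^-)(\cdot,0)$, and whether one argues by an invariant region or, equivalently, by comparison with the constants $0$ and $2$.

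\textbf{Failure of positivity for $n\ge2$: rescaling to the wave equation.} Here the mechanism is purely hyperbolic and I would extract it by rescaling simultaneously in amplitude and in space--time. Fix $\tau>0$ and a nonnegative $\psi\in C_c^\infty(\R^n)$ of product form $\psi(x)=\tilde\psi(x_1,x_2)\,\phi(x_3,\dots,x_n)$, with $\tilde\psi\ge0$ a smooth bump supported in the planar annulus $\{1<|(x_1,x_2)|<2\}$ and $\phi\ge0$ equal to $1$ for $|(x_3,\dots,x_n)|\le4$ and compactly supported. For small $\eta>0$ put $\rho_0=\eta\,\psi(\cdot/\eta)$ (smooth, with $0\le\rho_0\le1$ as soon as $\eta\|\psi\|_\infty\le1$) and $\rho_t(\cdot,0)=0$, and let $\rho^\eta$ solve~\eqref{eq:reactive_telegraph}. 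Writing $\rho^\eta(x,t)=\eta\,P^\eta(x/\eta,t/\eta)$, a direct computation gives
\beq
\tau P^\eta_{tt}+\eta\bigl(1-\tau+2\tau\eta P^\eta\bigr)P^\eta_t=\Delta P^\eta+\eta^2P^\eta(1-\eta P^\eta),\qquad P^\eta(\cdot,0)=\psi,\quad P^\eta_t(\cdot,0)=0.
\eeq
Every term other than the wave operator $\tau P^\eta_{tt}-\Delta P^\eta$ carries a factor $\eta$, so by the standard $H^s$ energy theory ($s>n/2+1$) the $P^\eta$ exist on a time interval $[0,T]$ independent of $\eta$ and converge, as $\eta\to0$, locally uniformly on $\R^n\times[0,T]$ to the solution $P^0$ of the pure wave equation $\tau P^0_{tt}=\Delta P^0$ with data $(\psi,0)$; its propagation speed is $c=\tau^{-1/2}$.

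\textbf{The wave--equation computation.} It then suffices to find $t_0'>0$ with $P^0(0,t_0')<0$: for $\eta$ small one gets $P^\eta(0,t_0')<0$, hence $\rho^\eta(0,\eta t_0')=\eta P^\eta(0,t_0')<0$, proving the theorem with $(x_0,t_0)=(0,\eta t_0')$. Take $t_0'=3\sqrt\tau$, so $ct_0'=3$. Since $\psi$ equals $\tilde\psi(x_1,x_2)$ on the backward domain of dependence $\overline{B(0,3)}$ of $(0,t_0')$, finite speed of propagation and uniqueness identify $P^0(0,t_0')$ with the value at the origin of the solution of the \emph{two--dimensional} wave equation with datum $(\tilde\psi,0)$ — indeed, that solution, extended to be constant in $x_3,\dots,x_n$, solves the $n$--dimensional wave equation and agrees with $P^0$ on this cone. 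As $ct_0'=3>2\ge\sup_{\operatorname{supp}\tilde\psi}|z|$, the backward light cone engulfs $\operatorname{supp}\tilde\psi$ and the two--dimensional Poisson formula yields
\beq
P^0(0,t_0')=\frac1{2\pi c}\,\partial_t\Big|_{t=t_0'}\int_{\R^2}\frac{\tilde\psi(z)\,dz}{\sqrt{c^2t^2-|z|^2}}=-\frac1{2\pi c}\int_{\R^2}\frac{c^2t_0'\,\tilde\psi(z)}{\bigl(c^2t_0'^2-|z|^2\bigr)^{3/2}}\,dz<0,
\eeq
the integrand being strictly positive. This is exactly the two--dimensional — hence, by the descent above, $n$--dimensional — instance of the classical fact that the position--data wave propagator $\partial_tW_c(t)$ fails to be positivity preserving when $n\ge2$; it is precisely the phenomenon that does not occur for $n=1$.

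\textbf{Main obstacle.} The soft ingredients — the choice of $\psi$, the finite--speed reduction to two dimensions, the sign of the Poisson integral — are routine. The real work is the rescaling step: one must show the family $P^\eta$ is well posed on a time interval independent of $\eta$ and depends continuously on $\eta$ all the way down to the linear wave limit $P^0$. Because~\eqref{eq:reactive_telegraph} is hyperbolic this cannot use parabolic smoothing and must be carried out through Sobolev energy estimates, verifying that the $\eta$--dependent damping coefficient $1-\tau+2\tau\eta P^\eta$ and the quadratic reaction do not spoil the uniform--in--$\eta$ bounds; making this, together with the constant--extension/finite--speed comparison, fully rigorous is the principal difficulty. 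For the $n=1$ statement the remaining loose end is the precise translation of $\rho_t(\cdot,0)=0$ into admissible initial data for the kinetic system.
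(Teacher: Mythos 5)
Your proposal is correct in its essential ideas, and it splits naturally into a part that mirrors the paper and a part that takes a genuinely different route. For $n=1$ you do exactly what the paper does: pass to the kinetic system~\eqref{eq:kinetic_system1} and show that $[0,2]^2$ is an invariant region for $(p^+,p^-)$ by checking the sign of the relaxation and reaction terms on each face. The paper implements this with the perturbations $p^\pm_\e=p^\pm+\e t$, a first touching time $T_\e$, and a contradiction at the touching point (plus a separate argument, using finite speed of propagation, that the touching point exists in a bounded set); your face-by-face sign check is the same computation in cleaner form, and the loose end you flag about realizing $\rho_t(\cdot,0)=0$ as kinetic data is present in the paper as well, which simply takes $p^+(\cdot,0)=p^-(\cdot,0)=\rho_0$.

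For $n\ge2$ both arguments ultimately rest on the same mechanism --- the two-dimensional position-data wave propagator is not positivity preserving --- but the implementations differ. The paper fixes Gaussian data $\rho_0=\e e^{-|x|^2/\delta}$ with $\e\le\delta^{5/4}$, removes the damping by the substitution $u=e^{(1-\tau)t/2\tau}\rho$, invokes the quoted energy estimate (\Cref{thm:hyperbolic_regularity}) to get $L^\infty$ control of $v$ and $v_t$ on a time interval independent of $\e,\delta$, and then reads the negativity off the $tD\rho_0(y)\cdot(y-x)$ term in Duhamel's formula~\eqref{takis301} after setting $\delta=t^2$ and sending $t$, then $\e$, to zero. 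You instead rescale amplitude and space--time, $\rho^\eta(x,t)=\eta P^\eta(x/\eta,t/\eta)$, so that every non-wave term carries a power of $\eta$, pass to the free wave limit $P^0$, and evaluate $P^0$ at the origin after the light cone has engulfed an annular bump, where the Poisson formula gives a manifestly negative integral. Your route buys a more transparent source of the sign (no cancellation between a positive and a negative term in the Poisson kernel, no tuning of $\delta$ against $t$), at the cost of needing the perturbative well-posedness statement on the \emph{fixed, possibly long} interval $[0,3\sqrt\tau]$: a theorem like \Cref{thm:hyperbolic_regularity} gives a uniform-in-$\eta$ existence time $T_0$ from the uniformly bounded data $(\psi,0)$, but to reach $t_0'=3\sqrt\tau$ you must additionally exploit that the nonlinearity is $O(\eta)$ via a continuation/bootstrap argument. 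You correctly identify this as the main missing step; it is standard but is real work, and it is precisely the step the paper avoids by arranging to conclude at arbitrarily small times. The method of descent to two dimensions via data constant in $x_3,\dots,x_n$ and finite speed of propagation is sound and plays the same role as the paper's remark that it suffices to treat $n=2$.
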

\smallskip

We discuss first why~\eqref{eq:reactive_telegraph} preserves positivity when $n=1$.   In this case,~\eqref{eq:reactive_telegraph} is equivalent to~\eqref{eq:kinetic_system1}, which 
 preserves positivity because, roughly, 
  $\rho$ is controlled by $p$ from below.  Neither the equivalence of the model~\eqref{eq:reactive_telegraph} to a kinetic equation nor the fact that $\rho$ is controlled below by $p$ holds in higher dimensions.
 \smallskip

As is evident in the proof of \Cref{thm:non_positive_telegraph},~\eqref{eq:reactive_telegraph} does not preserve positivity for a wide class of non-linearities $F$.  The essential ingredients of the proof are that the equation is well-defined for short times for data in a ``nice'' enough Sobolev space and that $F$ is piecewise $C^1$ near zero with $F(0) = 0$.  In particular, the second claim in~\Cref{thm:non_positive_telegraph} is due to the properties of the wave operator and not the non-linearity $\rho(1-\rho)$.
\smallskip

In view of the discussion above, the reactive-telegraph equation allows for negative population densities, suggesting that it may not be an appropriate biological model in some settings.  As such, we restrict our focus to the following  generalization of the kinetic system~\eqref{eq:kinetic_system1} 
\begin{equation}\label{eq:the_prelim_equation}
p_t + a_{n,\tau} v \cdot D p = \frac{1}{\tau} \left( \rho - p\right) + \rho(1-\rho)_+ \ \text{ in }\  \R^n\times S^{n-1}\times \R_+,
\end{equation}
where $D$ is the spatial gradient, $p$ is the density of individuals moving with velocity $v \in S^{n-1}$,
$$\rho(x,t): = \fint_{S^{n-1}} p(x,v,t)dv$$ 
is the population density, 
\beq\label{takis230}
a_{n,\tau} := \sqrt{n/\tau}
\eeq
 is the speed of pure transport, $S^{n-1}$ is the unit sphere in $\R^n$, $\R_+:=(0,\infty),$ 
$\fint$ denotes the normalized integral such that $\fint_{S^{n-1}}dx =1$, and $x_+ := \max(x,0)$.
\smallskip

We make a few general comments regarding~\eqref{eq:the_prelim_equation}.  First, although the reactive-telegraph equation \eqref{eq:reactive_telegraph} is equivalent to a kinetic model like~\eqref{eq:the_prelim_equation} (cf.~\eqref{eq:kinetic_system1}) in one dimension, there is no rigorous connection between the two in higher dimensions.  Second, we use the non-linearity $\rho(1-\rho)_+$ so that the model preserves positivity\footnote{See \Cref{lem:a_priori} for a proof of the preservation of positivity and see \Cref{sec:positivity} for a discussion of a related model with the logistic non-linearity $\rho(1-\rho)$ that does not preserve positivity, which suggests that $\rho(1-\rho)_+$ is a better choice of non-linearity.} and retains the fundamental aspects of the logistic one, $\rho(1-\rho)$. In particular, it represents the physical assumption that growth and competition depend only on the total population at a particular location and do not depend on velocity. 
Lastly, the $\sqrt n$ factor in $a_{n,\tau}$ is to fix the hydrodynamic limit $\tau \to 0$ as the classical Fisher-KPP equation regardless of the dimension.  This is discussed  later in the paper (see \Cref{prop:tau}).
\smallskip

By analogy with the Fisher-KPP equation, we expect the population to spread linearly-in-time.  In situations like this, where a front\footnote{By ``front,'' we mean the area between where $\rho$ approximately takes the value $0$ and the value $1$.} is expected to move at an approximately constant speed, it is standard to use the hyperbolic long-range, long-time limit, see Barles, Evans and Souganidis~\cite{BarlesEvansSouganidis}, Evans and Souganidis~\cite{EvansSouganidis}, and  Majda and Souganidis~\cite{MajdaSouganidis} and the large body of literature citing these works. This limit corresponds to scaling space and time by the same large parameter in order to capture this linear-in-time propagation while ignoring fluctuations and short-time behavior.
\smallskip

As such, we use the hyperbolic scaling $(x,v,t) \mapsto (x/\e,v,  t/\e)$ and consider  the rescaled function $$p^\e(x,v,t) := p(x/\e,v,t/\e),$$ which solves 
\begin{equation}\label{takis1}
p^\e_t + a_{n,\tau} v \cdot D p^\e = \frac{1}{\e\tau} \left( \rho^\e - p^\e\right) +\frac{1}{\e}  \rho^\e(1-\rho^\e)_+ \ \ \text{in} \ \ \R^n\times S^{n-1}\times \R_+,
\end{equation}
with $$\rho^\e(x,t): = \fint_{S^{n-1}} p^\e(x,v,t)dv.$$
\smallskip

We study the behavior, as $\e\to 0$, of $p^\e$ and $\rho^\e$ with initial datum 
\begin{equation}\label{takis3}
p^\e(\cdot,\cdot,0) = p_0 \  \text{ on } \ \R^n\times  S^{n-1} \times \{0\}
\end{equation}
such that
  \begin{equation}\label{assumption:G_0}
\begin{cases}
 0\leq p_0\leq 1, \text{and there exists an open, Lipschitz domain $G_0 \subset \R^n$ so that}\\[1mm]
 G_0:=\{x\in \R^n:  \inf_v
	p_0(x,v) >0\} \  \text{and} \ 
 	G_0^c = \{x \in \R^n: \sup_v p_0(x,v) = 0\}. 
  \end{cases}
  \end{equation}
%
In order to investigate the propagation properties  of  $p^\e$ and $\rho^\e$ as $\e\to 0$, we use 
the classical Hopf-Cole transform $p^\epsilon = \exp\left(J^\e/\e\right)$, which is a standard tool in the study of front propagation~\cite{BarlesEvansSouganidis, EvansSouganidis, MajdaSouganidis},  and study first the behavior, as $\e \to 0$, of the $J^\e$'s, which solve 
\begin{equation}\label{eq:the_equation}
\begin{cases}
J_t^{\e} + a_{n,\tau} v \cdot D J^\e& = \frac{1}{\tau} \left( \fint_{S^{n-1}} e^{\frac{1}{\e} (J^\e(x,v',t) - J^\e(x,v,t)) }dv' - 1\right) \\[4.5mm]  \qquad & +  \fint_{S^{n-1}} e^{\frac{1}{\e}{(J^\e(x,v',t) - J^\e(x,v,t))}} dv' (1-\rho^\e)_+ \ \text{ in } \  \R^n \times S^{n-1} \times \R_+.
\end{cases}
\end{equation}
Note that it is possible to use this transformation since  $p^\e \geq 0$ in $\R^n\times S^{n-1}\times \R_+$. This is shown to be the case in  Lemma~\ref{takis10} if \eqref{assumption:G_0} holds.
\smallskip

In what follows, when necessary to signify the dependence on $\tau$, we write   $J^{\e,\tau}$, $J^\tau$ and $H^\tau$ instead of $J^\e$, $J$ 
and $H$. 

\begin{theorem}\label{thm:phi_convergence}
Assume \eqref{assumption:G_0} and let $J^\e$ solve~\eqref{eq:the_equation} with initial data $J^\e(\cdot,\cdot,0)=\e \log p_0$ on $\R^n\times  S^{n-1}.$  Then,  for each $\tau >0$, there exists a concave, rotationally invariant  function $H: \R^n \to \R$, defined in~\eqref{takis30},  such that, as $\e \to 0$ and  uniformly in $v$ and locally uniformly in $\R^n \times \R_+$,  $J^\e$ converges to  $J$, the unique solution to
\begin{equation}\label{eq:J}
\max\left (J_t + H(DJ), J \right)= 0\  \text{ in } \ \R^n \times \R_+  \qquad J(\cdot,0) = \begin{cases}
		- \infty \ \ &\text{ in } \ \  \overline G_0^c,\\[1mm]
		0 \ \ &\text{ in } \ \ G_0.
	\end{cases}
\end{equation}
\end{theorem}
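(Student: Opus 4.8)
\emph{Strategy and a priori bounds.} The plan is to prove \Cref{thm:phi_convergence} by the method of half-relaxed limits together with a perturbed test function argument built from the cell problem generated by the fast velocity relaxation, and to close the argument with the comparison principle for \eqref{eq:J}. First, since $0\le p^\e\le 1$ (see \Cref{lem:a_priori} and Lemma~\ref{takis10}), one has $J^\e\le 0$. The transport speed $a_{n,\tau}$ in \eqref{takis1} is unaffected by the hyperbolic scaling, so finite speed of propagation gives $p^\e(x,v,t)=0$, i.e.\ $J^\e(x,v,t)=-\infty$, whenever $\dist(x,\overline{G_0})>a_{n,\tau}t$. On the region where $J^\e$ is finite I would establish, by comparison with explicit barriers, local bounds on $J^\e_t$, $DJ^\e$ and $D_vJ^\e$ uniform in $\e$ (the bound on $D_vJ^\e$ using that, in the transport equation satisfied by $D_vJ^\e$, the zeroth-order term from the relaxation is dissipative and the source is the bounded term $a_{n,\tau}DJ^\e$). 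The key estimate is the decay of the velocity oscillation, $\Osc_vJ^\e(x,\cdot,t)\to 0$ as $\e\to0$ locally uniformly: this is forced by the stiff term $\tfrac1\tau\big(\fint_{S^{n-1}}e^{(J^\e(x,v',t)-J^\e(x,v,t))/\e}dv'-1\big)$, since at a velocity where $J^\e$ lies more than $\delta$ below its maximum in $v$ the $v$-Lipschitz bound makes this integral at least $e^{\delta/(2\e)}$ times the positive, $\e$-independent measure of the near-maximizing set, which cannot be balanced against the locally bounded left-hand side $J^\e_t+a_{n,\tau}v\cdot DJ^\e$ once $\e$ is small.

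\emph{The cell problem and the half-relaxed limits.} Linearizing \eqref{eq:the_equation} in the velocity about a $v$-independent profile, on a region where $\rho^\e\to 0$ so that $(1-\rho^\e)_+\to 1$, leads to the cell problem: given $p\in\R^n$, find a constant $\lambda$ and a bounded $w=w_p\colon S^{n-1}\to\R$ with
\[
	\Big(1+\tfrac1\tau\Big)e^{-w(v)}\fint_{S^{n-1}}e^{w(v')}dv'=\tfrac1\tau+\lambda+a_{n,\tau}\,v\cdot p\qquad\text{for all }v\in S^{n-1}.
\]
Dividing by $\fint_{S^{n-1}}e^{w}$ and integrating over $S^{n-1}$ shows this is solvable precisely when $\lambda=-H(p)$, with $H$ the function fixed by the resulting normalization \eqref{takis30}, and then $w_p(v)=-\log\big(\tfrac1\tau-H(p)+a_{n,\tau}v\cdot p\big)$ up to an additive constant; the rotational invariance and concavity of $H$ are read off from \eqref{takis30}. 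Now let $\overline J$ and $\underline J$ denote the upper and lower half-relaxed limits of $J^\e$, taken over $v$ as well; by the previous step they are independent of $v$, and $\underline J\le\overline J\le 0$.

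\emph{Passing to the limit and conclusion.} To show $\overline J$ is a subsolution of \eqref{eq:J}, let $\phi$ be smooth with $\overline J-\phi$ having a strict local maximum at $(x_0,t_0)$, $t_0>0$, $\overline J(x_0,t_0)>-\infty$; set $p=D\phi(x_0,t_0)$ and use the perturbed test function $\phi^\e(x,v,t)=\phi(x,t)+\e w_p(v)$. Then $J^\e-\phi^\e$ has a local maximum at some $(x_\e,v_\e,t_\e)\to(x_0,\bar v,t_0)$, where $DJ^\e=D\phi$, $J^\e_t=\phi_t$ and, since $\phi$ does not depend on $v$, $\fint_{S^{n-1}}e^{(J^\e(x_\e,v',t_\e)-J^\e(x_\e,v_\e,t_\e))/\e}dv'\le e^{-w_p(v_\e)}\fint_{S^{n-1}}e^{w_p(v')}dv'$. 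Substituting into \eqref{eq:the_equation}, using $(1-\rho^\e)_+\le 1$, letting $\e\to0$ and invoking the corrector identity at $\bar v$ gives $\phi_t+H(D\phi)\le 0$ at $(x_0,t_0)$; since $\overline J\le 0$ this is the subsolution inequality for $\max(J_t+H(DJ),J)=0$. For $\underline J$, at a strict local minimum of $\underline J-\phi$ with $\underline J(x_0,t_0)<0$ the oscillation estimate yields $\max_vJ^\e\to\underline J(x_0,t_0)<0$ along the minimizing sequence, hence $\rho^\e\to 0$ and $(1-\rho^\e)_+\to 1$ there, and the reversed inequalities give $\phi_t+H(D\phi)\ge 0$; at points where $\underline J(x_0,t_0)=0$ the supersolution inequality is automatic. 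Finite speed of propagation gives $\overline J(\cdot,0)=-\infty$ on $\overline G_0^c$, while on $G_0$ the bounds $J^\e\le 0$, $\e\log p_0\to 0$ and a linear-in-$t$ subsolution barrier give $\underline J(\cdot,0)=0$; thus $\overline J(\cdot,0)\le J(\cdot,0)\le\underline J(\cdot,0)$. A comparison principle for \eqref{eq:J}---valid because $H$ is continuous, with the $-\infty$ datum handled by truncating the initial data at level $-M$ and letting $M\to\infty$, using the Lipschitz regularity of $\partial G_0$---then forces $\overline J\le J\le\underline J$; combined with $\underline J\le\overline J$ this gives $\overline J=\underline J=J$, which is the asserted convergence, uniform in $v$ by the oscillation estimate and locally uniform in $\R^n\times\R_+$ with the convention that $J^\e\to-\infty$ where $J=-\infty$.

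\emph{Main obstacle.} I expect the main difficulties to be the a priori estimates of the first step---in particular, quantifying the decay of the velocity oscillation against the singular relaxation term, and the uniform gradient bound, whose derivation must control the factor $\e^{-1}$ produced by differentiating the nonlocal term---and, in the cell problem, constructing admissible correctors in the degenerate regime $\tfrac1\tau-H(p)=a_{n,\tau}|p|$, where the explicit $w_p$ becomes unbounded and one must instead use truncated sub- and super-correctors; this degenerate regime is precisely what produces the phase transition studied later in the paper. The comparison argument with $-\infty$ initial data up to $\partial G_0$ is also somewhat delicate and is where the hypothesis that $G_0$ is a Lipschitz domain is used.
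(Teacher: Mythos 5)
Your overall architecture — half-relaxed limits, a cell problem in $v$ producing $H$ via \eqref{takis30}, perturbed test functions $\phi+\e w_p(v)$, and a comparison principle with the $-\infty$ datum handled by truncation — is the same as the paper's (\Cref{lem:sub_super} and the proof of \Cref{thm:phi_convergence}). However, two of the steps you lean on are genuine gaps, not just deferred technicalities. First, your first paragraph asserts local bounds on $J^\e_t$, $DJ^\e$, $D_vJ^\e$ uniform in $\e$ and deduces from them the decay of $\Osc_v J^\e$; you then use this oscillation estimate both to make the half-relaxed limits $v$-independent and, in the supersolution step, to conclude $\rho^\e\to0$ at the minimizing points. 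No proof of these bounds is given, and they are exactly what the method of~\cite{BarlesPerthame,BarlesSouganidis} is designed to avoid: near $\partial\{p^\e>0\}$ the function $J^\e$ drops to $-\infty$, so a uniform interior gradient bound cannot hold up to that set, and any barrier construction would have to be localized in a way you do not describe. The paper sidesteps all of this by truncating first ($J^\e_A=\max\{J^\e,-A\}$), taking half-relaxed limits of $\max_v J^\e_A$ and $\min_v J^\e_A$ (which are $v$-independent and bounded by construction), proving sub/supersolution properties of the truncated variational inequality \eqref{eq:J_A2}--\eqref{eq:J_A}, and obtaining both the $v$-independence and $\rho^\e(x_\e,t_\e)\to0$ as outputs of the viscosity-solution argument (the latter from the boundedness of the integral term in \eqref{chris3} together with $J^\e(x_\e,v_\e,t_\e)$ staying in $(-A,0)$), rather than as inputs. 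Your order of operations — half-relaxed limits of the untruncated $J^\e$, truncation only of the initial data in the comparison step — does not give well-defined, locally bounded limits to which the stability machinery applies.

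Second, you acknowledge but do not resolve the degenerate regime $\Phi(1)\le\frac{\tau}{1+\tau}a_{n,\tau}|p|$, where $H(p)=-a_{n,\tau}|p|+\frac1\tau$ and the exact corrector $w_p$ is unbounded. This regime is vacuous for $n\le 3$ (where $\Phi(1)=\infty$) but occurs for all large $|p|$ when $n\ge4$, so it cannot be skipped if the theorem is to hold in all dimensions. It is not enough to say ``truncated sub- and super-correctors'': the subsolution side uses $e^{\eta_\delta(v)}\propto(1+\delta+v\cdot\hat p_0)^{-1}$ and the normalization \eqref{takis160}, while the supersolution side requires the family $\eta_{\mu,\delta}$ of \eqref{takis109} with exponent $\mu\uparrow(n-1)/2$ and the argument that the divergence of $\fint(1+v'\cdot\hat p_0)^{-\mu}dv'$ forces $v_{\mu,0,0}\to-\hat p_0$, which is what produces the correct inequality $\phi_t-a_{n,\tau}|D\phi|+\frac1\tau\ge0$. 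This is a substantive piece of the proof, not a routine approximation. (A minor further point: $p^\e\le1$, hence $J^\e\le0$, is only true for $\tau\le1$; for $\tau>1$ the bound is $M_\tau=(1+\tau)^2/4\tau$ from \Cref{lem:a_priori}, so the correct a priori upper bound is $J^\e\le\e\log M_\tau$, which fortunately still vanishes in the limit.)
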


We make a few brief comments on some technical aspects of \Cref{thm:phi_convergence}.  First, the limiting function $J$ does not depend on $v$.  Second, when $p_0$ is zero, we define $\log p_0$ to be the extended real value $-\infty$.  Lastly, by the locally uniform convergence of $J^\e$ to $J$, we mean that this convergence is uniform on compact sets $K$ such that either $K \subset \Int\{J = -\infty\}$ or $K \subset \Int\{J < -\infty\}$.
\smallskip

Knowing Theorem~\ref{thm:phi_convergence} we then infer the following spreading behavior of  $\rho^\e$. 
\begin{theorem}\label{thm:propagation}
	Suppose that \eqref{assumption:G_0}   holds and that $J$ is the solution to~\eqref{eq:J}. Then, as $\e\to 0$ and locally uniformly in $\{J < 0\}$,  $\lim_{\e \to 0} \rho^\e(x,t)= 0.$
%
\smallskip

If $\tau\leq 1$, then, 	 as $\e\to 0$ and locally uniformly in $\Int \{ J = 0\},$ $\lim_{\e \to 0} \rho^\e(x,t)=1.$
\smallskip

If $\tau >1$, then, as $\e\to0$ and locally uniformly in $\Int \{ J = 0\}, \   \liminf_{\e\to 0} \rho^\e(x,t) \geq 1.$
	
\end{theorem}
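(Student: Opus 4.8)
The plan is to handle the decay to $0$ by a direct estimate on $p^\e=e^{J^\e/\e}$, and to force $\rho^\e$ up to $1$ by comparison with an explicit sub-solution of the kinetic equation~\eqref{takis1}; the dichotomy in $\tau$ enters only through the available upper bound. For the first assertion, fix a compact set $K$ on which $J^\e\to J$ locally uniformly (in the sense of the remark following \Cref{thm:phi_convergence}) and on which $J<0$. By \Cref{thm:phi_convergence}, for $\e$ small one has $p^\e=e^{J^\e/\e}\le e^{-\delta/\e}$ on $K\times S^{n-1}$ for some $\delta>0$ (with $J^\e\to-\infty$ on the part of $K$ where $J=-\infty$), hence $\rho^\e=\fint_{S^{n-1}}p^\e\,dv\le e^{-\delta/\e}\to0$ uniformly on $K$. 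This uses nothing beyond \Cref{thm:phi_convergence}.

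For the lower bound in $\Int\{J=0\}$, fix $\eta\in(0,1)$ and a compact $K\subset\Int\{J=0\}$; it suffices to show $\rho^\e\ge1-\eta$ on $K$ for $\e$ small. Given $(x_0,t_0)\in K$, by compactness there is $R>0$, independent of the point, with $\overline{B_R(x_0)}\times[t_0-T,t_0]\subset\Int\{J=0\}$ for $T:=R/(8a_{n,\tau})$. Since this closed cylinder lies in $\Int\{J=0\}\subset\Int\{J>-\infty\}$, \Cref{thm:phi_convergence} yields $J^\e\to0$ uniformly in $v$ on it; hence, setting $\delta:=T/2$, for $\e$ small we obtain the crude ``seed'' $p^\e=e^{J^\e/\e}\ge e^{-\delta/\e}$ on the whole cylinder.

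Next I would run the logistic reaction on this seed. Let $h^\e$ solve $\dot{h}^\e=\e^{-1}h^\e(1-h^\e)$ with $h^\e(t_0-T)=e^{-\delta/\e}$, let $\chi\colon\R\to[0,1]$ be smooth, nonincreasing, with $\chi\equiv1$ on $(-\infty,R/4]$ and $\chi\equiv0$ on $[R/2,\infty)$, and set
\[
\underline p^\e(x,v,t):=h^\e(t)\,\chi\!\bigl(|x-x_0|+a_{n,\tau}(t-(t_0-T))\bigr).
\]
A short computation---using $\fint_{S^{n-1}}v\,dv=0$, $\chi'\le0$, $0<h^\e<1$, and $\dot{h}^\e=\e^{-1}h^\e(1-h^\e)_+$---shows that $\underline p^\e$ is a sub-solution of~\eqref{takis1} on $\R^n\times S^{n-1}\times(t_0-T,t_0]$, while at $t=t_0-T$ one has $\underline p^\e=e^{-\delta/\e}\chi(|x-x_0|)\le p^\e$ (the cutoff vanishes for $|x-x_0|\ge R/2$, and $p^\e\ge e^{-\delta/\e}$ on $\overline{B_R(x_0)}$). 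By the comparison principle for~\eqref{takis1}, $p^\e\ge\underline p^\e$ on the slab; since $\chi(a_{n,\tau}T)=\chi(R/8)=1$, this gives $\rho^\e(x_0,t_0)=\fint_{S^{n-1}}p^\e(x_0,v,t_0)\,dv\ge h^\e(t_0)$, and integrating the logistic ODE shows $h^\e(t_0)>1-\eta$ provided $\delta+\e\log\tfrac{1-\eta}{\eta}<T$, which holds for $\e$ small, with the threshold uniform over $K$. Therefore $\liminf_{\e\to0}\rho^\e\ge1$ locally uniformly in $\Int\{J=0\}$, which is the assertion for $\tau>1$; and when $\tau\le1$, the bound $\rho^\e\le1$ from \Cref{lem:a_priori} upgrades this to $\rho^\e\to1$ locally uniformly in $\Int\{J=0\}$.

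The main obstacle is the comparison principle invoked above. Equation~\eqref{takis1} is nonlocal through $\rho^\e=\fint_{S^{n-1}}p^\e\,dv$, and its reaction $r\mapsto r(1-r)_+$ is not monotone, so the argument must combine the order-preservation of $p\mapsto\fint_{S^{n-1}}p\,dv$ with the fact that $r(1-r)_+\equiv0$ on $[1,\infty)$: for $\tau\le1$ this is a routine maximum-principle computation, and for $\tau>1$ the only configuration at a first touching point where the non-monotonicity could interfere is $\rho^\e\gtrsim1$, which is harmless for the one-sided inequality $p^\e\ge\underline p^\e$. It is worth noting that the whole lower-bound argument is purely local in $\Int\{J=0\}$: the seed is supplied by \Cref{thm:phi_convergence} on each interior cylinder, so no propagation of positivity from $G_0$ is needed.
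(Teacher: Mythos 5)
Your first assertion ($\rho^\e\to0$ on $\{J<0\}$) is proved exactly as in the paper and is fine. For the lower bound on $\Int\{J=0\}$ you take a genuinely different route: the paper never invokes a comparison principle for~\eqref{takis1}; instead it tests $J$ from below at a point of $\Int\{J=0\}$ with $\phi(x,t)=-|t-t_0|^2-|x-x_0|^2$, evaluates~\eqref{eq:the_equation} at the minimizing velocity $v_\e$ (where $\rho^\e/p^\e\ge1$), and reads off from the vanishing of the left-hand side that both $\rho^\e/p^\e\to1$ and $(1-\rho^\e)_+\to0$, which it then transports back to $(x_0,t_0)$. Your ``seed plus logistic sub-solution'' construction is attractive and, for $\tau\le1$, I believe it can be made rigorous: there the map $r\mapsto \tfrac{r}{\tau}+r(1-r)_+$ is nondecreasing on $[0,\infty)$, so at a first touching point the ordering $\rho^\e\ge\underline\rho^\e$ does push the right-hand sides the correct way.

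The gap is the case $\tau>1$, which is precisely where your theorem still claims $\liminf\rho^\e\ge1$. Your comparison argument requires, at a touching point where $p^\e=\underline p^\e$ and $\rho^\e\ge\underline\rho^\e$, that
\[
g(\rho^\e)\;\ge\;g(\underline\rho^\e),\qquad g(r):=\frac{r}{\tau}+r(1-r)_+ ,
\]
since the $-p/\tau$ terms cancel there. But $g'(r)=\tfrac1\tau+1-2r<0$ for $r\in\bigl(\tfrac{1+\tau}{2\tau},1\bigr)$, and indeed $g\bigl(\tfrac{1+\tau}{2\tau}\bigr)=\tfrac{(1+\tau)^2}{4\tau^2}>\tfrac1\tau=g(1)$ whenever $\tau>1$. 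Your sub-solution level $h^\e(t)$ sweeps continuously from $e^{-\delta/\e}$ up to $1-\eta$, so it necessarily passes through $\underline\rho^\e=\tfrac{1+\tau}{2\tau}<1$; if at that moment $\rho^\e=1$ at the touching point (perfectly consistent with $p^\e\ge\underline p^\e$, since for $\tau>1$ the solution is only bounded by $(1+\tau)^2/4\tau>1$), then the true right-hand side is $\tfrac{1}{\e\tau}(1-\tfrac{1+\tau}{2\tau})=\tfrac{\tau-1}{2\e\tau^2}$, which is strictly smaller than the $\tfrac{\tau^2-1}{4\e\tau^2}$ your sub-solution needs, and at the center of the cutoff the transport term gives no slack ($\chi\equiv1$, $\chi'=0$). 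So the differential inequality cannot be closed and the claim that the non-monotone configuration is ``harmless'' is false; this is the same obstruction the paper flags when it notes that maximum-principle bounds by $1$ fail for $\tau\ge1$. To repair the $\tau>1$ case you would either have to cap the sub-solution at a level where $g$ is still monotone (which only yields $\liminf\rho^\e\ge1/\tau$), or abandon comparison and argue as the paper does directly from the equation at a minimum point of $J^\e$ in $v$.
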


Before discussing the proof of \Cref{thm:propagation}, we mention the reason that there is a distinction between $\tau >1$ and $\tau \leq 1$.  When $\tau<1$, it is possible to bound $p$ by 1 by using a maximum principle-type argument.  On the other hand, when $\tau \geq 1$, it is shown in \Cref{lem:a_priori} that   upper bound is order  $(1+\tau)^2/4\tau$. The proof of \Cref{thm:propagation} yields that $\liminf_{\e\to0} \rho^\e(x,t) \geq 1$ on $\Int\{J=0\}$.  When $\tau <1$, the bound above gives us immediately that $\limsup_{\e \to 0} \rho^\e(x,t) \leq 1$, yielding the exact value of the limit.  When $\tau\geq 1$, this argument does not work.
\smallskip

To prove the asymptotic results, we use the classical half-relaxed limits of Barles and Perthame~\cite{BarlesPerthame} along the lines of~\cite{BarlesEvansSouganidis, EvansSouganidis} and a modification due to Barles and Souganidis~\cite{BarlesSouganidis}, which allows us to side-step the technical difficulty that, due to the finite speed of propagation in kinetic equations, the $J^\e$'s take the value $-\infty$.  The combination of the half-relaxed limits and the technique of~\cite{BarlesSouganidis} is that, roughly, when the limiting Hamilton-Jacobi equation represents a distance function, no a priori bounds or regularity are needed to pass to the limit $\e \to 0$.
\smallskip

The Hamiltonian $H$ in \Cref{thm:phi_convergence} is the same one found by Bouin and Calvez~\cite{BouinCalvezEikonal} and Caillerie~\cite{Caillerie} in a related context since the linearized equations are the same.
\smallskip


Since $H$ in \eqref{eq:J} is space-time homogeneous, 
it follows  from~\cite{EvansSouganidis,MajdaSouganidis} that
\begin{equation}\label{eq:I_tau}
J(x,t)=\min\left( I(x,t), 0\right),
\end{equation}
where $I$ is the solution to 
\[ 
I_t + H(DI) =0 \ \ \text{in} \ \  \R^n\times \R_+,
	 \qquad I(\cdot, 0)= \begin{cases}
		- \infty \ \text{ in } \  \overline G_0^c,\\[1mm]
		0 \ \text{ in} \ G_0.
	\end{cases}
\]
Let $L$ be the concave dual of $H$. Then, see, for example, Lions \cite{LionsBook},
\begin{equation}\label{chris35}
	I(x,t)= t \sup\left\{ L \left( \frac{x-y}{t}
\right) : y\in G_0 \right\}.
\end{equation}

As mentioned in \Cref{thm:phi_convergence}, $H$ is concave and rotationally invarant.  From this it follows that $H$ is radially decreasing; this can  also be seen  from the explicit formula~\eqref{takis30}. As a result, $L$ has the same properties. 
Taking some liberty with notation,  we write $L(q)=L(|q|)$. It then follows that, for $t\in \R_+$,
\begin{equation}\label{chris105}
 \{x \in \R^n: J(x,t)<0\} =\left\{ x\in \R^n:   \sup_{y\in G_0} L\left( \frac{x-y}{t}\right)  <0\right\}= \left\{ x\in \R^n:  L\left(\frac{d(x, G_0)}{t}\right)<0 \right\},
\end{equation}
where $d(x,G_0)$ is the usual distance from the point $x$ to the closed set $G_0$. 
\smallskip

In view of \Cref{thm:propagation}, it is clear that the front is $\partial \{x \in \R^n: J(x,t) <0\}$.  From the discussion above, we see that 
\begin{equation}\label{chris36}
	\partial  \{x \in \R^n: J(x,t)<0\}=\partial \{x\in \R^n: I(x,t)<0\}=
	\left\{ x\in \R^n: L\left(\frac{d(x, G_0)}{t}\right)=0 \right\}.
\end{equation}

%
%

The next result, which holds  for  $n\geq 2$, provides a characterization of the $0$-level set of $L$ and, hence, the speed of propagation in terms of a global property of the Hamiltonian.
\smallskip

Let 
$$c_{n,\tau}: = - \sup_{q\in\R^n} \frac{H(q)}{|q|}.$$
The claim is: 
 \begin{prop}\label{prop:speedn}
 	Assume~\eqref{assumption:G_0} and  $n \geq 2$.  
Then  $c_{n,\tau}$ is achieved,  $c_{n,\tau} < a_{n,\tau},$
and the  front is the set  $\{x \in \R^n : d(x,G_0) = c_{n,\tau} t\}$.  
\end{prop}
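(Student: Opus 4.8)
The plan is to read three facts off the formula~\eqref{takis30} for $H$ (equivalently, off the eigenvalue relation it encodes: $\tfrac1\tau-H(q)$ is the principal value $\mu$ with $\fint_{S^{n-1}}(\mu+a_{n,\tau}(v\cdot q))^{-1}\,dv=\tfrac{\tau}{1+\tau}$, or $\tfrac1\tau-a_{n,\tau}|q|$ when no such $\mu>a_{n,\tau}|q|$ exists):
(i) $H(0)<0$;
(ii) $H(q)\le\tfrac1\tau-a_{n,\tau}|q|$ for every $q$, so that $-H(q)/|q|\ge a_{n,\tau}-\tfrac1{\tau|q|}$;
and, the essential point, (iii) $H(q)>-a_{n,\tau}|q|$ for some $q$. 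Items (i)--(ii) are immediate: the denominator $\tfrac1\tau-H(q)+a_{n,\tau}(v\cdot q)$ must stay positive, and the pure-transport term dominates as $|q|\to\infty$. For (iii), fix a unit vector $e$ and write $\mathcal H(s):=H(se)$. There is a unique $s^\ast>0$ with $\mathcal H(s^\ast)=-a_{n,\tau}s^\ast$, because $s\mapsto\fint_{S^{n-1}}\bigl(\tfrac1\tau+a_{n,\tau}s(1+v\cdot e)\bigr)^{-1}dv$ decreases continuously from $\tau$ to $0$ and so equals $\tfrac\tau{1+\tau}$ at exactly one $s^\ast$; for that $s^\ast$ the pair $(\mathcal H(s^\ast),s^\ast e)=(-a_{n,\tau}s^\ast,s^\ast e)$ solves the eigenvalue relation with a positive eigenfunction and hence realizes the principal value defining $H$ in~\eqref{takis30}. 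Differentiating that relation in $s$ at $s^\ast$ gives $\tfrac{d}{ds}\bigl(\mathcal H(s)+a_{n,\tau}s\bigr)\big|_{s^\ast}=a_{n,\tau}\,\fint_{S^{n-1}}(1+v\cdot e)\,d\mu\,/\,\fint_{S^{n-1}}d\mu>0$, with $d\mu(v)=\bigl(\tfrac1\tau+a_{n,\tau}s^\ast(1+v\cdot e)\bigr)^{-2}dv$, the strictness because $1+v\cdot e>0$ for a.e.\ $v\in S^{n-1}$ --- this is exactly where $n\ge2$ enters. Since $\mathcal H(s^\ast)+a_{n,\tau}s^\ast=0$, we get $\mathcal H(s)>-a_{n,\tau}s$ for $s$ slightly above $s^\ast$, which is (iii); hence $\sup_qH(q)/|q|>-a_{n,\tau}$, i.e.\ $c_{n,\tau}<a_{n,\tau}$. (For $n\ge4$ this is already plain, since~\eqref{takis30} gives $\mathcal H(s)=\tfrac1\tau-a_{n,\tau}s$ for $s$ past a threshold, so $\mathcal H(s)+a_{n,\tau}s=\tfrac1\tau>0$ there.)

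Next, $c_{n,\tau}$ is attained. Put $g(s):=-\mathcal H(s)/s$ on $(0,\infty)$: it is continuous, $g(s)\to+\infty$ as $s\to0^+$ by (i), and $g(s)\ge a_{n,\tau}-\tfrac1{\tau s}$ by (ii), so $\liminf_{s\to\infty}g(s)\ge a_{n,\tau}>c_{n,\tau}$. Fixing $s_1$ with $g(s_1)<a_{n,\tau}$, the set $\{s>0:g(s)\le\tfrac12(g(s_1)+a_{n,\tau})\}$ is a nonempty compact subset of $(0,\infty)$ on which $g$ attains its minimum, which is the global minimum; thus $c_{n,\tau}=\inf_{s>0}g(s)=-\sup_qH(q)/|q|$ is attained at some $r^\ast>0$ (and $c_{n,\tau}>0$).

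For the front, recall $L$, the concave dual of $H$, is concave and rotationally invariant, with $L(0)=-\sup_pH(p)=-H(0)>0$ by (i). It then suffices to show $L(c_{n,\tau}e)=0$ and $L(se)<0$ for $s>c_{n,\tau}$: concavity of $s\mapsto L(se)$ on $[0,\infty)$ will force $L(se)>0$ on $[0,c_{n,\tau})$, so $\{s\ge0:L(s)=0\}=\{c_{n,\tau}\}$. Both come from $L(se)=\inf_{r\ge0}\bigl(-rs-\mathcal H(r)\bigr)=\min\bigl(-H(0),\ \inf_{r>0}r(g(r)-s)\bigr)$: for $s>c_{n,\tau}$ pick $r>0$ with $g(r)<s$ to obtain a negative term; and for $s=c_{n,\tau}$ the minimizer $r^\ast$ of $g$ from the previous step contributes $r^\ast(g(r^\ast)-c_{n,\tau})=0$ while every term is $\ge0$. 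Plugging $\{s\ge0:L(s)=0\}=\{c_{n,\tau}\}$ into~\eqref{chris36} (or, equivalently, combining $\{x:J(x,t)<0\}=\{x:d(x,G_0)>c_{n,\tau}t\}$ from~\eqref{chris105} with the standard fact that the level sets $\{d(\cdot,G_0)=c\}$, $c>0$, have empty interior) identifies the front as $\{x\in\R^n:d(x,G_0)=c_{n,\tau}t\}$.

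The main obstacle is (iii), i.e.\ the \emph{strict} inequality $c_{n,\tau}<a_{n,\tau}$. Soft properties of $H$ (concavity, radial monotonicity, the bound (ii)) give only $\liminf_{s\to\infty}g(s)\ge a_{n,\tau}$ and leave open whether $\inf_{s>0}g(s)$ drops strictly below $a_{n,\tau}$; showing that the ratio $H(q)/|q|$ \emph{overshoots} $-a_{n,\tau}$ forces use of the explicit structure of~\eqref{takis30} --- either the implicit-differentiation identity above, whose crux is the positivity of a ratio of velocity averages (valid precisely for $n\ge2$), or a direct asymptotic analysis for $n=2,3$ of the singular average $\fint_{S^{n-1}}(w+a_{n,\tau}v\cdot e)^{-1}dv$ as $w\downarrow a_{n,\tau}$. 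The remaining steps --- attainment by compactness and the duality computation for $L$ --- are routine.
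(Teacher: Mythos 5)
Your proposal is correct, and on the crux of the proposition it takes a genuinely different route from the paper. The paper splits by dimension: for $n>3$ it reads $c_{n,\tau}<a_{n,\tau}$ and attainment directly off the first alternative of~\eqref{takis30} (since $H(p)/|p|=\tfrac{1}{\tau|p|}-a_{n,\tau}$ for large $|p|$ strictly exceeds its limit $-a_{n,\tau}$), while for $n=2,3$ it computes $H$ in closed form (\eqref{eq:2d_Hamiltonian}, \eqref{eq:3d_Hamiltonian}) and leaves the conclusion to ``straightforward calculations.'' You instead prove the strict inequality uniformly for all $n\ge2$ by locating the unique $s^\ast$ where $\mathcal H(s^\ast)=-a_{n,\tau}s^\ast$ (using that $\fint(\tfrac1\tau+a_{n,\tau}s(1+v\cdot e))^{-1}dv$ decreases from $\tau$ to $0$) and differentiating the eigenvalue relation there to show $\mathcal H(s)+a_{n,\tau}s$ becomes positive just past $s^\ast$; the strict positivity of $\fint(1+v\cdot e)\,d\mu$ is exactly where $n\ge2$ enters, and indeed both it and the existence of $s^\ast$ fail for $n=1$, $\tau\ge1$, consistent with the hyperbolic regime. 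This buys you a dimension-independent argument that avoids the explicit Hamiltonians (which the paper needs anyway for \Cref{prop:speed12}), at the cost of a slightly longer setup; the paper's computation is shorter given that those formulas are derived later regardless. Your attainment argument (coercivity of $g(s)=-\mathcal H(s)/s$ at $0$ via $H(0)=-1$ and the lower bound $g(s)\ge a_{n,\tau}-\tfrac1{\tau s}$ at infinity) and your duality computation for the front match the paper's in substance, though you are more explicit than the paper in verifying that $\{s\ge 0: L(s)=0\}=\{c_{n,\tau}\}$ (positivity of $L$ below $c_{n,\tau}$ via concavity and $L(0)=1$, strict negativity above), which is what is actually needed to conclude from~\eqref{chris36}; this is a welcome addition rather than a divergence.
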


Since $c_{n,\tau} < a_{n,\tau},$ it follows that, for $n\geq 2$, the front moves with velocity slower than that of pure transport. Following~\cite{BouinCalvezNadin1}, we call this behavior parabolic  to distinguish it from the hyperbolic one, which is observed, as discussed below, for $n=1$,  when $\tau\geq 1.$
\smallskip

To heuristically justify the term ``parabolic,'' we return to the unscaled problem~\eqref{eq:the_prelim_equation} and discuss the behavior of $\rho$ when $\rho_0$ has compact support.  Indeed, due to the kinetic nature of~\eqref{eq:the_prelim_equation}, the support of $\rho$ propagates with the speed of the pure transport $a_{n,\tau}$. \Cref{prop:speedn}, however, implies 
that the set on which $\rho$ is approximately $1$ propagates at speed $c_{n,\tau}$. 
This difference suggests that $\rho$ has long tails connecting $0$ and $1$.  As a result the profile resembles that of solutions to the Fisher-KPP equation, whose long tails are caused by the infinite speed of propagation in the heat equation.  See Figure~\ref{fig} for a cartoon picture of this, and see~\cite{BouinCalvezNadin1} for an explicit construction where the long tails are observed.
\smallskip

\begin{figure}
\begin{center}
\begin{overpic}[scale=.65]
		{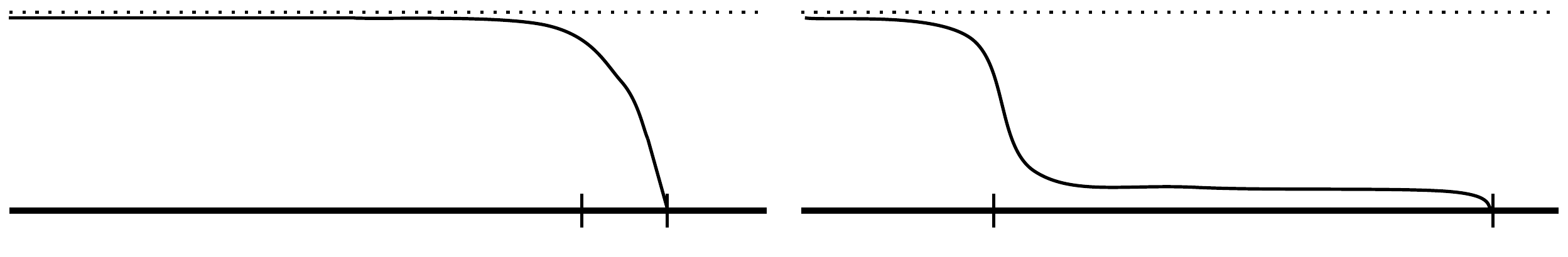}
	\put (34.5,0){$c_{n,\tau}t$}
	\put (40.5,0){$a_{n,\tau}t$}	
	\put (60.5,0){$c_{n,\tau}t$}
	\put (93,0){$a_{n,\tau}t$}
	\put (2,0){$x$}
	\put (0,17) {$1$}
	\put (42, 8) {$\rho$}
	\put (66,8) {$\rho$}
	\put (10,-2.5) {Hyperbolic behavior}
	\put(65,-2.5){Parabolic behavior}
\end{overpic}
\end{center}
\caption{Cartoon pictures showing the difference between the hyperbolic behavior (the graph on the left) seen in one spatial dimension when $\tau \geq 1$ and the parabolic behavior (the graph on the right) seen otherwise.  The key distinction is, in the parabolic behavior, the formation of long tails connecting the boundary of the support of $\rho$, which progress with speed $a_{n,\tau}$, with the location of the front, which progresses with speed $c_{n,\tau}$, which is strictly less than $a_{n,\tau}$.}
\label{fig}
\end{figure}

When $n=1$ and $n=2$  it is possible to explicitly compute $H$ and, hence, the speed of propagation.  This is the contents of the proposition below.  We point out that the speed $c_{1,\tau}$ is the one given in~\cite{Fedotov} and is the speed of the traveling waves constructed by Bouin, Calvez, and Nadin in~\cite{BouinCalvezNadin1}.
\begin{prop}\label{prop:speed12}
	Assume   \eqref{assumption:G_0}. 
	Then 
		\[c_{\tau,1} =
		\begin{cases}
			\frac{2}{1+\tau}  \ \text{ if } \ \tau \leq 1,\\[1mm]
			\frac{1}{\sqrt\tau} \ \text{ if } \ \tau \geq 1,
		\end{cases}
\text{and} \quad  c_{\tau,2} = \frac{\sqrt{2(2+\tau)}}{1 + \tau}.
	\]
\end{prop}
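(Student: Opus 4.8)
The plan is to use the explicit form of the Hamiltonian $H=H^\tau$ recorded in~\eqref{takis30}, which in dimension $n$ comes from the principal eigenvalue problem for the linearized kinetic operator: writing $H(q)=-c(q)$, the value $c(q)$ is the (large, positive) root of the dispersion relation
\[
\Big(1+\tfrac1\tau\Big)\fint_{S^{n-1}}\frac{dv}{\,c(q)+\tfrac1\tau+a_{n,\tau}\,v\cdot q\,}=1 ,
\]
with $c(q)+\tfrac1\tau>a_{n,\tau}|q|$ so that the integrand stays positive. By rotational invariance it suffices to take $q=re_1$ with $r=|q|\ge 0$, and then $c_{n,\tau}=-\sup_q H(q)/|q|=\inf_{r>0}c(r)/r$. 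The relation above is solvable in closed form precisely for $n=1,2$, because the sphere average then integrates explicitly; once $H$ is known the proposition reduces to a one-variable minimization.

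For $n=1$, $S^0=\{\pm1\}$ carries the measure $\tfrac12(\delta_1+\delta_{-1})$, so the average equals $\sigma/(\sigma^2-a_{1,\tau}^2 r^2)$ with $\sigma=c(r)+\tfrac1\tau$; using $a_{1,\tau}=1/\sqrt\tau$ this is a quadratic in $\sigma$ whose larger root gives
\[
H(r)=-\,\frac{\tau-1+\sqrt{(1+\tau)^2+4\tau r^2}}{2\tau}.
\]
The map $r\mapsto -H(r)/r$ blows up as $r\to0^+$ and decreases to $a_{1,\tau}=1/\sqrt\tau$ as $r\to\infty$. Setting its derivative to zero simplifies, after clearing the square root, to $\sqrt{(1+\tau)^2+4\tau r^2}=(1+\tau)^2/(1-\tau)$, which has a unique solution $r>0$ exactly when $\tau<1$; in that case $r=(1+\tau)/(1-\tau)$ and substituting back yields $-H(r)/r=2/(1+\tau)$, which is then the global minimum. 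When $\tau\ge 1$ there is no interior critical point, so $-H(r)/r$ is strictly decreasing and its infimum is the asymptotic value $1/\sqrt\tau$ (not attained). This produces the two-case formula for $c_{1,\tau}$, the two branches agreeing at $\tau=1$.

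For $n=2$ I would use the classical identity $\tfrac1{2\pi}\int_0^{2\pi}(A+B\cos\theta)^{-1}d\theta=(A^2-B^2)^{-1/2}$ for $A>|B|$, with $A=\sigma=c(r)+\tfrac1\tau$ and $B=a_{2,\tau}r$; since $a_{2,\tau}=\sqrt{2/\tau}$ the dispersion relation becomes $\sigma^2=(1+\tfrac1\tau)^2+\tfrac2\tau r^2$, hence
\[
H(r)=-\,\frac{-1+\sqrt{(1+\tau)^2+2\tau r^2}}{\tau}.
\]
Again $-H(r)/r\to+\infty$ as $r\to0^+$ and $-H(r)/r\to a_{2,\tau}=\sqrt{2/\tau}$ as $r\to\infty$, and \Cref{prop:speedn} already guarantees that the supremum defining $c_{2,\tau}$ is attained and lies strictly below $a_{2,\tau}$. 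The first-order condition reduces to $\sqrt{(1+\tau)^2+2\tau r^2}=(1+\tau)^2$, i.e.\ $r=(1+\tau)\sqrt{(\tau+2)/2}$, and plugging this in gives $-H(r)/r=\sqrt{2(2+\tau)}/(1+\tau)$, as claimed.

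The one genuinely delicate point is the dichotomy in the case $n=1$: one has to notice that for $\tau\ge1$ the first-order condition has no admissible root, so that the infimum of $-H(r)/r$ is forced to be its horizontal asymptote $a_{1,\tau}$, which is precisely the hyperbolic regime in which the front moves at the transport speed. Everything else is elementary algebra together with the two closed-form sphere integrals, the only care being to confirm that the critical points found are global minima, which follows from the monotone behavior of $-H(r)/r$ near $0$ and near $\infty$.
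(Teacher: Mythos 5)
Your proposal is correct and follows essentially the same route as the paper: derive the explicit one- and two-dimensional Hamiltonians from the integral form of the dispersion relation (\eqref{eq:1d_HJ} and \eqref{eq:2d_Hamiltonian}) and then minimize $-H(r)/r$ by elementary one-variable calculus, with the $\tau\ge 1$ case in one dimension handled by observing that there is no admissible critical point so the infimum is the asymptotic transport speed. The paper additionally cross-checks the one-dimensional answer by computing the concave dual $L$ and using the front characterizations \eqref{chris105}--\eqref{chris36}, but that is a verification of the same value rather than a different method.
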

When $n=1$ and $\tau \geq 1$, $a_{1,\tau} = c_{1,\tau}$.  We refer to this behavior as ``hyperbolic''.  To heuristically justify this term, we again return to the unscaled problem~\eqref{eq:the_prelim_equation} and consider initial data $\rho_0$ with compact support.  Following the discussion above, we see that the support of $\rho$ and the set on which $\rho$ approximately takes the value $1$ move with the same speed.  In particular, we do not see the formation of long tails as in the discussion above.  Once more, see Figure~\ref{fig} for a cartoon representation of this behavior and see~\cite{BouinCalvezNadin1} for an explicit construction where one sees this behavior.
\smallskip

It follows from the last two propositions that there is a phase transition, depending on $\tau$, between parabolic and hyperbolic behavior only in one dimension.  We now heuristically explain the reason for this. In order to exhibit hyperbolic behavior, growth due to the reaction must ``keep up'' with pure transport.  That this is possible in only $\R^1$ is related to the fact that $p$ controls $\rho$ from below; that is, for any $v \in S^0=\{-1,1\}$, $p(x,v,t)/2 \leq \rho(x,t)$ for all $(x,t)$.  In particular, the non-local reaction term $\rho(1-\rho)_+$ can be bounded below by a local term when $\rho$ is small. This is not possible in higher dimensions.
\smallskip

Formally, taking the limit $\tau\to 0$ in~\eqref{eq:reactive_telegraph}, we recover the Fisher-KPP equation $u_t = \Delta u + u(1-u)$.  In addition, solutions of~\eqref{eq:the_prelim_equation} formally converge, as $\tau \to 0$, to the Fisher-KPP equation
regardless of dimension;  see, for example, the discussion of Cuesta, Hittmeir, and Schmeiser in~\cite{CuestaHittmeirSchmeiser} and also Section 3 of~\cite{HillenOthmer}.
\smallskip

We prove that the limits may be taken in the reverse order to obtain the same limiting object.
%
Indeed, fix $u_0$ such that $G_0=\{u_0>0\}$ and let $u^\e$ be the  solution to 
\begin{equation*}
u^\e_t = \e \Delta u^\e + \e^{-1} u^\e(1-u^\e) \ \ \text{in} \ \  \R^n\times \  \R_+ \qquad u^\e(\cdot,0)=u_0 \ \ \text{on} \  \ \R^n. 
%
\end{equation*}

It is well-known (see~\cite{EvansSouganidis}) that, as $\e \to 0$ and locally uniformly in $(x,t)$,  $z^\e:=\e \log u^\e$ converges to the unique solution $z$ of
\begin{equation}\label{takis4}
		\max\{z_t - |Dz|^2 - 1, z\} = 0 \ \ \text{in} \ \ \R^n\times \R_+
		 \qquad  z(\cdot,0)= \begin{cases}
				0 \ \text{ on } \ G_0,\\[1mm]
				-\infty \ \text{ on }  \ \overline G_0^c. 
			\end{cases} 
\end{equation}
We show that, as $\tau \to 0$,  $J^\tau \to z$, implying that $\lim_{\tau\to 0}\lim_{\e\to0} J^{\tau,\e} = z$.  This is what one would expect in view of the discussion above, which formally gives that $\lim_{\e\to0}\lim_{\tau\to0} J^{\tau,\e} = z$.
\begin{prop}\label{prop:tau}
Assume  \eqref{assumption:G_0}  and let  $J^\tau$ and  $z$  be respectively the unique solutions to~\eqref{eq:J} and \eqref{takis4}.  Then, as $\tau \to 0$ and locally uniformly in $\R^n\times \R_+$,  $J^\tau \to z$.
\end{prop}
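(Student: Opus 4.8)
Proposition~\ref{prop:tau} is a stability-of-viscosity-solutions statement, and essentially all of its content is a scalar asymptotic: the convergence of the Hamiltonians. Write $H^0(q):=-1-|q|^2$, so that~\eqref{takis4} reads $\max\{z_t+H^0(Dz),z\}=0$. The plan is, first, to show that $H^\tau\to H^0$ locally uniformly on $\R^n$ as $\tau\to0$ and, second, to combine this with a half-relaxed-limits argument of exactly the kind already used for~\Cref{thm:phi_convergence}---following~\cite{BarlesPerthame,BarlesEvansSouganidis} together with the device of~\cite{BarlesSouganidis} for handling the $-\infty$ initial values---to pass from the equations to the solutions.

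\textbf{Step 1: the Hamiltonian limit.}
Recall from~\eqref{takis30} that $H^\tau(q)$ is characterized as the unique $\lambda<\tfrac1\tau-a_{n,\tau}|q|$ for which
\[
	1=\Bigl(1+\tfrac1\tau\Bigr)\fint_{S^{n-1}}\frac{dv}{\tfrac1\tau+a_{n,\tau}\,v\cdot q-\lambda};
\]
in this range the denominator is positive on $S^{n-1}$ and the right-hand side increases from $0$ to $+\infty$, so $H^\tau$ is well defined, and its monotonicity in $\lambda$ means it is enough to prove the asymptotics pointwise. Set $A:=\tfrac1\tau-\lambda$ and expand $(A+a_{n,\tau}v\cdot q)^{-1}$ in powers of $a_{n,\tau}v\cdot q/A$ to second order (the first-order term integrates to zero). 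Using $\fint_{S^{n-1}}v\,dv=0$, $\fint_{S^{n-1}}(v\cdot q)^2\,dv=|q|^2/n$, and $a_{n,\tau}^2=n/\tau$ from~\eqref{takis230}, one gets $A=\tfrac1\tau+1+|q|^2+O(\tau)$, hence $H^\tau(q)=\lambda=\tfrac1\tau-A\to-1-|q|^2=H^0(q)$; it is exactly the $\sqrt n$ in $a_{n,\tau}$ that makes the coefficient of $|q|^2$ equal to $1$ regardless of $n$ (cf.\ the comment after~\eqref{takis230}). A Jensen inequality applied directly to the defining identity gives the $\tau$-uniform bound $H^\tau\le-1$, and a second-order refinement of it gives a $\tau$-uniform quadratic coercivity bound on bounded sets of $q$; neither is needed for Step~2, but they are what one would use to argue instead through the Lax--Oleinik formula~\eqref{chris35}.

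\textbf{Step 2: from the equations to the solutions.}
Let $\overline J$ and $\underline J$ be the upper and lower half-relaxed limits of $\{J^\tau\}$ as $\tau\to0$, formed as in the proof of~\Cref{thm:phi_convergence} so as to accommodate the value $-\infty$. By Step~1 and the standard stability of viscosity solutions under locally uniform convergence of the Hamiltonian, $\overline J$ is a subsolution and $\underline J$ a supersolution of $\max\{z_t+H^0(Dz),z\}=0$; and since the initial datum $J^\tau(\cdot,0)$ is the single, $\tau$-independent function appearing in~\eqref{eq:J}, one checks exactly as for~\Cref{thm:phi_convergence} that $\overline J$ and $\underline J$ take the correct initial values, the $-\infty$ part posing no new difficulty. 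The comparison principle for~\eqref{takis4}---which is what underlies the uniqueness of $z$---then forces $\overline J\le z\le\underline J$, and since $\underline J\le\overline J$ always, all three coincide, which is precisely the assertion $J^\tau\to z$ locally uniformly in $\R^n\times\R_+$. (Alternatively, by~\eqref{eq:I_tau}--\eqref{chris35} one has $J^\tau=\min(I^\tau,0)$ with $I^\tau(x,t)=tL^\tau(d(x,G_0)/t)$, $L^\tau$ the concave dual of $H^\tau$; Step~1 together with the coercivity bounds gives $L^\tau\to L^0$ locally uniformly, whence $I^\tau\to I^0(x,t)=t-d(x,G_0)^2/(4t)$ and $J^\tau\to\min(I^0,0)=z$ uniformly on compact sets.)

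\textbf{Main difficulty.}
The only real work is Step~1, and its delicacy is that $\tau\to0$ is an indeterminate limit of the form ``$\infty\cdot(\text{small})$'': both $\tfrac1\tau$ and $a_{n,\tau}=\sqrt{n/\tau}$ blow up, so one must expand the spherical integral to second order and at the same time keep the subleading term of the prefactor $(1+\tfrac1\tau)^{-1}$, these contributions combining to expose the Fisher--KPP Hamiltonian $-1-|q|^2$. Everything after Step~1 is a routine reprise of the machinery already set up for~\Cref{thm:phi_convergence}, made easier here by the fact that the initial data does not depend on $\tau$.
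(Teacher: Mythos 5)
Your proposal is correct and follows essentially the same route as the paper: the paper also truncates at $-A$, takes half-relaxed limits in $\tau$, and obtains the limiting Hamiltonian $-1-|Dz|^2$ from exactly the second-order Taylor expansion of the spherical integral $\fint_{S^{n-1}}\bigl(\tfrac1\tau+\phi_t+a_{n,\tau}v\cdot D\phi\bigr)^{-1}dv$ together with the identity $\fint_{S^{n-1}} n(v\cdot D\phi)^2\,dv=|D\phi|^2$, before concluding by comparison. The only difference is organizational --- you isolate $H^\tau\to H^0$ as a standalone limit and invoke generic stability, while the paper performs the same expansion directly inside the viscosity test-function inequality at the approximating points (noting, as you implicitly do, that for fixed $p$ and small $\tau$ only the implicit branch of~\eqref{takis30} is active); this is a repackaging rather than a different argument.
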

The convergence in \Cref{prop:tau} essentially follows from the formal convergence of $H^\tau$ to $-|\cdot|^2 -1$.
 As seen explicitly in the proof, without the $\sqrt n$ factor in~\eqref{takis230}, there would be a factor $n$ in the equation for $\lim_{\tau\to0} J^\tau$. This is related to the fact that the variance of any unit vector over $S^{n-1}$ is $1/n$.  We mention briefly that, in the equation~\eqref{eq:the_prelim_equation}, the $\sqrt n$ factor can be added or removed by simply scaling in $x$ and therefore does not affect our analysis.
\smallskip

Next we discuss briefly related works.  As mentioned above, the one dimensional problems have been considered from the modeling perspective in~\cite{HillenOthmer,Holmes,Kac}.  The reactive-telegraph model~\eqref{eq:reactive_telegraph} has only been rigorously studied in the one dimensional setting.  We point out, in particular, the work of Bouin, Calvez, and Nadin~\cite{BouinCalvezNadin1}, in which they prove existence and stability of traveling waves in a weighted space.  In addition, there  is~\cite{Fedotov} which was discussed earlier. 
There has been more work recently on related kinetic equations in one dimension.  Bouin and Calvez~\cite{BouinCalvezEikonal} obtained  convergence to a limiting equation for the linearization of~\eqref{eq:the_prelim_equation} assuming that $J_0$ is Lipschitz and bounded.  Later Bouin, Calvez, and Nadin~\cite{BouinCalvezNadin2}, and Bouin, Calvez, Grenier, and Nadin~\cite{BCGN} showed acceleration in a kinetic-reactive equation similar to our setting but where the velocity space is unbounded and the reaction term is replaced by $\rho(M(v)-p)$.  In higher dimensions, the kinetic model studied here with $\rho(1-\rho)$ replaced by $\rho(M(v) - p)$ was investigated by Caillerie~\cite{Caillerie}.  In this work Caillerie performs a limiting procedure with well-prepared initial data and identifies the same  Hamiltonian as the one we find above\footnote{That they are related can be seen by the fact that $F(\rho)$ and $\rho(M(v) - p)$ have the same linearization when $M(v)$ is 1 for all $v\in \mathbb{S}^{n-1}$.}.
\smallskip

Finally, during the write-up of this work, we became aware of a parallel work by Bouin and Caillerie~\cite{BouinCaillerie} which has  some  similarity  to the present one. In~\cite{BouinCaillerie}, the authors investigate a related kinetic system with reaction term $\rho(M(v)-p)$ and with general dispersal kernel $M(v)$. 
The authors consider the long-range, long-time limit of this equation, the question of propagation in the unscaled equation, and the existence of traveling waves.  There are many differences between the two papers.  First, in the long-range, long-time limit, Bouin and Caillerie consider well-prepared initial data\footnote{The authors fix initial data for their analogue of~$J^\e$ and apparently assume that this initial data is independent of $\epsilon$ and finite everywhere.  This is in contrast to the present work in which $J^\e$ takes infinite values in $G_0^c$.} for $J^\epsilon$, which corresponds to initial conditions that are exponentially decaying (in contrast to the compactly supported initial conditions considered here and widely considered in the front propagation literature\footnote{ Since the study of front propagation is the study of a new population (or other physical quantity) invading a previously unexplored area, which corresponds to $p_0$ being zero, this type of assumption is crucial to a full investigation.  We note that this is not a mere technical detail -- there are many situations where the behavior of compactly supported initial data and well-prepared initial data are quite different (see, e.g., Bramson~\cite{Bramson83}, Zlato\v{s}~\cite{Zlatos}, Cabre and Roquejoffre~\cite{CabreRoquejoffre}).}).  Second, the equation considered by Bouin and Caillerie enjoys the maximum principle.  These two facts simplify many technical details in the long-range, long-time limit.  Also, the questions considered in the present work about the appropriateness of the reactive-telegraph equation, the difference in behavior with the non-linearity $\rho(1-\rho)$ versus $\rho(1-\rho)_+$, and the existence and dependence on dimension of the phase transition in $\tau$ are not considered in~\cite{BouinCaillerie}.  On the other hand, the work of Bouin and Caillerie considers a much broader class of kinetic models, which will be useful in studying a variety of biological models.
\smallskip

\subsection*{Organization of the paper} 
The paper is organized into six main parts.  In \Cref{sec:proofs} we present  a preliminary lemma that gives an upper bound for $J^\e$ that is independent of $\e$ and prove the convergence part of \Cref{thm:propagation}. The Hamiltonian and its properties as well as the proof of \Cref{thm:phi_convergence} are given  in \Cref{sec:proofs1}.
In \Cref{sec:speeds} we prove \Cref{prop:speedn} and \Cref{prop:speed12}.  In \Cref{sec:tau} we show that we may take the hydrodynamic limit $\tau\to0$ to recover the Hamilton-Jacobi equation of the Fisher-KPP equation.
In \Cref{sec:positivity} we prove Theorem \ref{thm:non_positive_telegraph} and discuss an example of a related 
kinetic system with logistic reaction term which does not preserve positivity either.  A brief Appendix is included containing computations of some non-standard integrals that are used throughout the manuscript.

\subsection*{The notion of solution} We are not making any assumptions for \eqref{takis1} and \eqref{eq:the_equation} to have smooth solutions. We interpret both equations as well as the limiting Hamilton-Jacobi variational inequalities and equations in the classical Crandall-Lions viscosity sense.

\subsubsection*{Acknowledgements}
We wish to thank the anonymous referees for a close reading of the manuscript and their very helpful comments.  CH would like to thank Jacek Jendrej for pointing out the refence~\cite{Sogge} regarding regularity of hyperbolic equations.
CH was partially supported by the National Science Foundation Research Training Group grant DMS-1246999.
PS was partially supported by the National Science Foundation grants DMS-1266383 and DMS-1600129 and the Office for Naval Research Grant N00014-17-1-2095.


%

\section {Bounds and the proof of Theorem~\ref{thm:phi_convergence}} 
\label{sec:proofs}
\subsection*{The a priori bounds} 

We state as a lemma the fact that \eqref{takis1} preserves positivity and yields an upper bound that is independent of $\e$. This is important, since without the  positivity of $p^\e$, $J^\e$ is not well-defined, while the upper bound is needed in order to study the limit $\e\to 0$.
\begin{lem}\label{takis10}\label{lem:a_priori}
Assume  $0\leq p_0\leq 1$ and let $p^\e$ be the solution to \eqref{takis1} and \eqref{takis3}. Then, for all $t\in \R_+$, 
$$0 \leq p^\e(\cdot, \cdot, t) \leq M_\tau, $$ where
\[ M_\tau = \begin{cases}
		1 \ \text{ if } \  \tau 
	\leq 1,\\[1.5mm]
		\frac{(1+\tau)^2}{4\tau} \   \ \text{ if } \  \tau >1.
	\end{cases}
 \]
\end{lem}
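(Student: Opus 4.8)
The plan is to reduce everything to two elementary properties of the scalar function $\Phi(r):=\frac{r}{\tau}+r(1-r)_+$, which is also where the case distinction $\tau\le 1$ versus $\tau>1$ originates. The parameter $\e$ plays no role in these a priori bounds, so I argue directly with \eqref{takis1}. First, remove the $p^\e$–dependence of the right–hand side of \eqref{takis1} using the integrating factor $e^{t/(\e\tau)}$: the equation becomes
\[
\partial_t\!\left(e^{t/(\e\tau)}p^\e\right)+a_{n,\tau}\,v\cdot D\!\left(e^{t/(\e\tau)}p^\e\right)=\frac{1}{\e}\,e^{t/(\e\tau)}\,\Phi(\rho^\e),
\]
and integrating along the characteristic $s\mapsto X(s)$ with $\dot X=a_{n,\tau}v$ and $X(t)=x$ yields the Duhamel identity
\begin{equation*}
p^\e(x,v,t)=e^{-t/(\e\tau)}\,p_0\!\left(X(0),v\right)+\frac{1}{\e}\int_0^t e^{-(t-s)/(\e\tau)}\,\Phi\!\left(\rho^\e(X(s),s)\right)ds.\tag{$\ast$}
\end{equation*}
Since $(\ast)$ is exactly the relation that characterizes the solution, no smoothness of $p^\e$ is needed in order to use it.

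The two facts about $\Phi$ are: $(i)$ $\Phi(r)\ge 0$ whenever $r\ge 0$, because $r/\tau$ and $r(1-r)_+$ are then both nonnegative; and $(ii)$ $\Phi(r)\le M_\tau/\tau$ for every $r\le M_\tau$. For $(ii)$, observe that $\tau\Phi(r)=r$ for $r\ge 1$ and $\tau\Phi(r)=r(1+\tau)-\tau r^2$ for $r\le 1$; the latter is a downward parabola with vertex at $r_\ast=\frac{1+\tau}{2\tau}$, and $r_\ast\ge 1$ exactly when $\tau\le 1$. Hence $\max_{r\le1}\tau\Phi(r)$ equals $\tau\Phi(1)=1$ if $\tau\le1$ and $\tau\Phi(r_\ast)=\frac{(1+\tau)^2}{4\tau}$ if $\tau>1$, which in both cases equals $M_\tau$; moreover $(\tau-1)^2\ge0$ forces $M_\tau\ge1$, so the linear branch $\tau\Phi(r)=r$ on $[1,M_\tau]$ also stays below $M_\tau$. (For $r<0$ one has $\Phi(r)<0$, so $(ii)$ is trivial there.)

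With $(i)$ and $(ii)$ in hand, suppose $0\le\rho^\e(\cdot,s)\le M_\tau$ for all $s\in[0,T]$. Then $0\le\Phi(\rho^\e(X(s),s))\le M_\tau/\tau$ on $[0,T]$, so, using $0\le p_0\le1\le M_\tau$, the identity $(\ast)$ gives for $t\in[0,T]$
\[
0\le p^\e(x,v,t)\le e^{-t/(\e\tau)}+\frac{M_\tau}{\tau}\int_0^t\frac{e^{-(t-s)/(\e\tau)}}{\e}\,ds=e^{-t/(\e\tau)}+M_\tau\!\left(1-e^{-t/(\e\tau)}\right)\le M_\tau,
\]
and averaging over $v\in S^{n-1}$ propagates the same bounds to $\rho^\e$ on $[0,T]$. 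Therefore the set of $T$ for which $0\le\rho^\e\le M_\tau$ on $\R^n\times[0,T]$ is nonempty, closed by continuity of $\rho^\e$, and open by the displayed estimate combined with the short–time existence/uniqueness (contraction mapping) used to construct $p^\e$ restarted at each such $T$; hence it is all of $[0,\infty)$, and $(\ast)$ then gives $0\le p^\e\le M_\tau$ everywhere.

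The only point requiring care is that $p^\e$ is a viscosity, not a classical, solution; I would handle this in the standard way, by carrying the above estimate along the smooth Picard iterates — each iterate satisfies $(\ast)$ with $\rho^\e$ replaced by the previous one, so $(i)$–$(ii)$ keep it in $[0,M_\tau]$, and the bound passes to the limit — or, equivalently, by a routine mollification of $p_0$ and a stability argument. I do not anticipate a serious obstacle here: the substance of the lemma is the elementary optimization in step $(ii)$, which is precisely where the threshold $\tau=1$ enters.
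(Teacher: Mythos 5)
Your proof is correct and is essentially the paper's argument in different packaging: the paper moves the damping term $\tfrac{1}{\e\tau}p^\e$ to the left, substitutes $q^\e=M_\tau-p^\e$, and observes that the resulting source is nonnegative, which is exactly your computation that $\Phi(r)=\tfrac{r}{\tau}+r(1-r)_+$ is nonnegative on $[0,M_\tau]$ and bounded by $M_\tau/\tau$ there (with the vertex location $\tfrac{1+\tau}{2\tau}$ producing the threshold $\tau=1$). Your Duhamel formulation and the explicit bootstrap/Picard closure are a more careful write-up of the continuation step the paper leaves implicit, but the underlying idea is the same.
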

\begin{proof}
The positivity of $p^\e$ follows from rewriting   \eqref{takis1} as
\[ p^\e_t + a_{n,\tau} v\cdot D p^\e + \frac{1}{\e \tau} p^\e= \frac{1}{\e \tau} \rho^\e +  \frac{1}{\e }\rho^\e(1-\rho^\e)_+,
\]
and observing that the right hand side of the above equation is always nonnegative.

\smallskip

For the upper bound, if $\tau >1$,  let $$q^\e:=\frac{(1+\tau)^2}{4\tau} -p^\e \ \ \text{and} \ \ \hat \rho^\e:= \fint_{S^{n-1}} q^\e(x,v,t)dv.$$ It follows that 
\[ q^\e_t + a_{n,\tau} v\cdot D q^\e + \frac{1}{\e\tau} q^\e= \frac{1}{\e \tau} \hat \rho^\e +  \frac{1}{\e }\left(\hat \rho^\e - \frac{(1+\tau)^2}{4\tau}\right)\left(\hat\rho^\e - \left(\frac{(1+\tau)^2}{4\tau} -1\right)\right)_+.
\]
Straightforward calculations  yield that the right hand side above is nonnegative, which, in turn, implies that $q^\e \geq 0$ and, hence, the claim.
\smallskip

When $\tau \leq 1$ the proof is similar, so we omit the details. 
\end{proof}

\subsection*{Propagation}

We present  the proof of  Theorem~\ref{thm:phi_convergence}, which is an immediate consequence of \Cref{thm:propagation}. 


\begin{proof}[Proof of \Cref{thm:phi_convergence}]
Fix any $(x_0,t_0) \in \{J<0\}$ with $t_0 > 0$.  Since $J^\e$ converges locally uniformly in $x,t$ and uniformly in $v$ to $J$, then, for all $\e$ sufficiently small, we have 
		\[ 
		\sup_{v \in S^{n-1}}J^\e( x_0,v, t_0) \leq J(x_0,t_0)/2 < 0.
	\]
It follows that 
	\[
	\rho^\e(x_0,t_0)= \fint_{S^{n-1}} p^\e(x_0,v,t_0)dv\leq 	\sup_{v \in S^{n-1}}p^\e(x_0,v, t_0) =e^{\frac{1}{\e}\sup_{v \in S^{n-1}}J^\e(v,x_0,t_0)}
			\leq e^{\frac{J(x_0,t_0)}{2\e}},
	\]
and, hence, $\lim_{\e\to 0} \rho^\e(x_0,t_0) =0$; the local uniformity of the limit is immediate.	
\smallskip
	
	Now consider any point $(x_0,t_0) \in \Int \{J = 0\}$ with $t_0 > 0$.  Let $\phi(x,t) = -|t - t_0|^2 - |x-x_0|^2$ and notice that $J - \phi$ has a strict local minimum at $(x_0,t_0)$.  Hence, there exist  $( x_\e,t_\e)$ converging to $(x_0, t_0)$ such that $\min_v J^\e(x,t) - \psi$ has a local minimum at $(x_\e,t_\e)$.  Let $v_\e$ be such that $J^\e(x_\e,v_\e,t_\e) = \min_ v J^\e(x_\e,v,t_\e)$.
	%
\smallskip
	  
	  Uaing~\eqref{eq:the_equation},  we find 
	\begin{equation}\label{eq:rho_inequality}
		\phi_t (x_\e,t_\e)+ a_{n,\tau} v_\e \cdot D \phi (x_\e,t_\e) \geq \frac{1}{\tau} \left(\frac {\rho^\e(x_\e,t_\e)}{p^\e(x_\e,v_\e,t_\e)} -1\right) + \frac {\rho^\e(x_\e,t_\e)}{p^\e(x_e,v_\e,t_\e)}	(1 - \rho^\e(x_\e,t_\e))_+. \\[1.5mm]
%
	\end{equation}
Observe that, since $(x_\e,v_\e,t_\e)$ is the location of a global minimum in $v$ of $J^\e(x_\e, v_\e, t_\e)$, 
$$\frac {\rho^\e(x_\e,t_\e)}{p^\e(x_e,v_\e,t_\e)} \geq 1.$$
Moreover, 
an explicit computation implies that, as $\e\to 0$,   $\phi_t (x_\e,t_\e)+ a_{n,\tau} v_\e \cdot D \psi (x_\e,t_\e)  \to 0.$ 
\smallskip

 Then 
 \begin{equation}\label{eq:rho_p_limit}
 	 \lim_{\e\to 0} \frac {\rho^\e(x_\e,t_\e)}{p^\e(x_\e,v_\e,t_\e)}=1 \ \ \text{and} \  \ \lim_{\e\to 0}(1 - \rho^\e(x_\e,t_\e))_+=0.
\end{equation}
The second limit above implies that   $\liminf_{\e \to 0}\rho^\e(x_\e,t_\e) \geq 1$. 
\smallskip

%
Next, recall that, for all $v\in S^{n-1}$, $J^\e(x_0,v,t_0) \geq J^\e(x_\e,v_\e,t_\e) - \phi(x_\e,t_\e)$.  Hence  
\[\begin{split}
	\rho^\e(x_0,t_0)
		&= \fint_{S^{n-1}} e^{\frac{J^\e(x_0,v,t_0)}{\e}}dv
		\geq \fint_{S^{n-1}} e^{\frac{J^\e(x_\e,v_\e,t_\e) + |x_\e - x_0|^2 + |t_\e-t_0|^2}{\e}}dv\\[2mm]
		&\geq p^\e(x_\e,v_\e,t_\e)
		= \left(\frac{p^\e(x_\e,v_\e,t_\e)}{\rho^\e(x_\e,t_\e)}\right) \rho^\e(x_\e,t_\e).
\end{split}
\]
Letting  $\e\to0$ and using~\eqref{eq:rho_p_limit}, 
we conclude that
\beq\label{takis195}
	\liminf_{\e\to0} \rho^\e(x_0,t_0) \geq 1.
\eeq

Since, when $\tau \leq 1$,  $p^\e \leq 1$, we have $\rho^\e \leq 1$, \eqref{takis195} yields  $$\lim_{\e\to0} \rho^\e(x_0,t_0)=1.$$

%
%
%
\end{proof}

\section{The Hamiltonian, its properties and the proof of Theorem~\ref{thm:propagation}}
\label{sec:proofs1}

\subsection*{The Hamiltonian $H$ and its properties}\label{sec:hamiltonian}

To motivate the choice of the Hamiltonian $H$, we first present a formal argument about the limit of the $J^\e$ 
assuming that 
$$J^\e(x,v,t)= J(x,t) + \e \eta(x,v,t) + \text{o}(\e).$$
Working in $\{ J<0 \}$, where we can ignore  $\rho^\e$ in $(1-\rho^\e)_+$, we get from  \eqref {eq:the_equation} that
$$J_t + a_{n,\tau} v\cdot DJ = \frac{1}{\tau} \left( \fint_{S^{n-1}} e^{\eta(x,v',t) - \eta(x,v,t) }dv' - 1\right) 
+  \fint_{S^{n-1}} e^{\eta(x,v',t) - \eta(x,v,t)} dv'.   $$
Since $ J_t $ is independent of $v$, there must exist some, independent of $v$, constant $H (  DJ)$ so that, for all $v\in S^{n-1}$, 
$$- H (  DJ) +\frac{1}{\tau}  +  a_{n,\tau}  v\cdot DJ  = \frac{{\tau} + 1}{\tau} \fint_{S^{n-1}} e^{\eta(x,v',t) - \eta(x,v,t) }dv'. 
$$  
The above expresssion leads to the cell (eigenvalue)  problem to find, for each $p\in \R^n$, a unique constant $H(p)$ (eigenvalue) and some $\eta=\eta(v;p)$ (eigenfunction) such that, for all $v\in S^{n-1}$, 
\begin{equation}\label{takis100}
\frac{e^{\eta(v)}}{\fint_{S^{n-1}} e^{\eta(v') }dv'}=  \frac{\tau +1}{\tau} \frac{1}{-H(p) +\frac{1}{\tau} + a_{n,\tau} v\cdot p}.%
\end{equation}

It follows that, if it exists, $H(p)$ must be defined implicitly by  
\begin{equation}\label{eq:integral_form}
	\frac{\tau}{1+\tau} = \fint_{S^{n-1}} \frac{dv}{-H(p) +\frac{1}{\tau}  +  a_{n,\tau} v\cdot p}.
\end{equation}

Consider the function $\Phi: [1,\infty) \to \overline \R_+$  given by 
\begin{equation}\label{eq:Phi}
	\Phi(s) := \fint_{S^{n-1}} \frac{dv}{s + v_1};
\end{equation}
here $v = (v_1,v_2,\dots,v_n)$.  It is immediate that $\Phi' <0$ and $\lim_{s \to\infty} \Phi(s)=0$. 
Moreover, as it is shown in the Appendix, 
 $\Phi(1) = \infty$ when $n \in \{1,2,3\}$, while $\Phi(1) < \infty$ for $n > 3$.
\smallskip

Fix $p\in \R^n\setminus \{0\}$.  Then, looking at \eqref{eq:integral_form} and the properties of $\Phi$ we assert that 
\begin{equation}\label{takis30}
H(p):=\begin{cases} -  a_{n,\tau}|p| +\frac{1}{\tau} \ \ \  &\text{ if} \ \ \Phi(1) \leq  \frac{\tau }{1+\tau} a_{n,\tau}|p|,\\[3mm]
\alpha \ \ &\text{ if} \  \ \Phi(1) \geq  \frac{\tau }{1+\tau} a_{n,\tau}|p|,
\end{cases}
\end{equation}
where, in the latter case, $\alpha$ is the unique 
%
negative number such that
\begin{equation}\label{takis102}
	\Phi\left( \frac{ -\alpha +\frac{1}{\tau} }{a_{n,\tau}|p|}\right) = \frac{\tau }{1+\tau} a_{n,\tau}|p|.
\end{equation}
Note  that $H$ is continuous, isotropic, that is depends only on $|p|$,  and 
$H (0)= -1.$ 
We note that this Hamiltonian is the same as found in~\cite{BouinCaillerie,Caillerie}.

\smallskip
%

%
%

Next we show that \eqref{takis30} is  indeed  correct  for every $p$, when $n \in \{1,2,3\}$, and for all $p$ such that  $\Phi(1) >  \frac{\tau }{1+\tau} a_{n,\tau}|p|$, if $n>3.$ We present the argument in the latter case, since the discussion applies to the former.

\smallskip 
If   $\Phi(1) >  \frac{\tau }{1+\tau} a_{n,\tau}|p|$, then $\eta: S^{n-1} \to \R$ given by
\begin{equation}\label{takis101}
e^{\eta(u)}=\frac{\tau+1}{\tau} \frac{1}{-H(p) +\frac{1}{\tau} +a_{n,\tau} v\cdot p},
\end{equation} 
with $H(p)$ given by the second alternative in \eqref{takis30},  clearly satisfies \eqref{takis100}. 
If   $\Phi(1) =  \frac{\tau }{1+\tau} a_{n,\tau}|p|$, then the $\eta$ given in \eqref{takis101} also satisfies \eqref{takis100} for all
$v\in S^{n-1}$ except  $v=-p/|p|$.
\smallskip

It is clear that, when $\Phi(1) <  \frac{\tau }{1+\tau} a_{n,\tau}|p|$,   it is not possible to find such an $\eta$. 
In the proofs, we deal with this issue by considering ``approximate'' correctors, which for $\delta >0$ and appropriately chosen $\mu >0$, are given by 
\beq\label{190}
\eta_{\mu,\delta}(v):=\frac{\tau +1}{\tau} \frac{1}{{(\delta-H(p) +\frac{1}{\tau} +a_{n,\tau} v\cdot p})^\mu}.
\eeq

%
%
\smallskip
We now discuss the concavity of $H$.  We present the argument for $n>3$, since the other cases follow similarly without having to deal with the first part of the definition of the Hamiltonian.
\smallskip

When $|p| >  (1+\tau)\Phi(1)/(a_{n,\tau}\tau)$,  the concavity is obvious from \eqref{takis30}. 
When $|p| < (1+\tau)\Phi(1)/ (a_{n,\tau} \tau)$,
we use~\eqref{eq:integral_form} to obtain
\begin{equation}\label{eq:DpH}
	\fint_{S^{n-1}} \frac{D_p H(p) - a_{n,\tau} v}{\left(- H(p) +\frac{1}{\tau}+a_{n,\tau}v \cdot p\right)^2}dv = 0.
\end{equation}
Differentiating again we get
\[
	\fint_{S^{n-1}} \frac{ D_p^2 H(p) dv}{\left(-H(p) +\frac{1}{\tau} + a_{n,\tau} v\cdot p\right)^2}
		= - 2\fint_{S^{n-1}}\frac{\left(D_p H(p)  - a_{n,\tau} v \right)\otimes\left(D_p H(p)  -  a_{n,\tau} v \right)}{\left( - H(p) +\frac{1}{\tau}+  a_{n,\tau}v \cdot p\right)^3}dv.
\]
Fix any non-zero vector $\xi \in \R^n$. Then
\[
	 D_p^2 H(p)\xi \cdot \xi  \fint_{S^{n-1}} \frac{ dv}{\left(-H(p) +\frac{1}{\tau} + a_{n,\tau} v\cdot p\right)^2}\\
		= - 2\fint_{S^{n-1}}\frac{\left|\left(D_p H(p)  - a_{n,\tau} v \right) \cdot \xi\right|^2}{\left( - H(p) +\frac{1}{\tau} + a_{n,\tau} v \cdot p\right)^3}dv, 
\]
and  concavity follows after noticing that, by its construction, $-H(p)+\frac{1}{\tau} - a_{n,\tau} |p|\geq0$.
 \smallskip
 
Finally to conclude we need to show that the concavity property is preserved across  $|p| = \frac{1+\tau}{\tau a_{n,\tau}}\Phi(1)$ when $\Phi(1) < \infty$. Since $H$ is isotropic, this is immediate if we show 
that 
\begin{equation}\label{chris21}
	\limsup_{|p| \to \frac{1+\tau}{\tau a_{n,\tau}}\Phi(1)^-} |D_p H(p)|
		\leq \lim_{|p| \to  \frac{1+\tau}{\tau a_{n,\tau}}\Phi(1)^+} |D_p H(p)| = a_{n,\tau}.
\end{equation}

To establish ~\eqref{chris21} we consider two cases depending on whether  $\fint_{S^{n-1}} (1+v\cdot \hat p)^{-2} dv$ is finite or not. 
Note that this quantity does not depend on $\hat p$, since we may simply change variables in the integral. 
\smallskip

If $\fint_{S^{n-1}} (1+v\cdot\hat p)^{-2} dv < \infty$ then, in view of~\eqref{eq:DpH}, we get 
\[
	\limsup_{|p| \to \frac{1+\tau}{\tau a_{n,\tau}}\Phi(1)^-} |D_p H(p)|
		= \left| \fint_{S^{n-1}} \frac{a_{n,\tau} v}{\left(1+v \cdot \hat p\right)^2}dv\right|
			\left( \fint_{S^{n-1}} \frac{1}{\left(1+v \cdot \hat p\right)^2}dv\right)^{-1}\leq a_{n,\tau}.
\]

If $\fint_{S^{n-1}}(1+v\cdot \hat p)^{-2}dv = \infty$, then, using~\eqref{eq:DpH}, we find 
\[
	\lim_{|p| \to \frac{1+\tau}{\tau a_{n,\tau}}\Phi(1)^-} |D_p H(p)| = a_{n,\tau}, 
\]
since, otherwise, the left hand side of~\eqref{eq:DpH} is not finite, and the claim follows. 
%
{
\subsection*{The half-relaxed limits and the limiting equation}\label{sec:half_relaxed}
We prove \Cref{thm:phi_convergence}.  The  main tools  are the classical  (in the theory of viscosity solutions) half-relaxed limits~\cite{BarlesPerthame} and the methodology of \cite{BarlesEvansSouganidis}. The problem is that, since  $J^\epsilon$ takes infinite values,  we do not have uniform bounds.  To circumvent this we use an argument introduced  in~\cite{BarlesSouganidis} to deal with this kind of difficulty. 
\smallskip

We begin with the definition of the half-relaxed limits. Given a family $f_\e$ of uniformly bounded functions depending only on $x$ and $t$, the half-relaxed upper and lower limits $\overline f$  and $\underline f$ are respectively 
$$\overline f(x,t):= \limsup f_\e(x,t):=\limsup_{y\to x, s\to t, \e\to 0} f_\e(y,s) \ \ \text{and} \ \ \underline f(x,t):= \liminf  f_\e(x,t):=\liminf_{y\to x, s\to t,\e\to 0} f_\e(y,s).$$
We also remark that,  in view of Lemma~\ref{takis10}, 
\begin{equation}\label{takis140}
J^\e \leq \e \log M_\tau  \ \ \text{in}  \ \  \R^n\times S^{n-1}\times \R_+.
\end{equation}

Fix $A>0$ and let  $$J^\e_A: = \max\{J^\epsilon,-A\}.$$ 
Since $J^\e_A$ is bounded, uniformly in $\e$,  from above and below by $\e\log M_\tau$ and $-A$ respectively, 
we  take the half-relaxed upper and lower limits  of $\max_v J^\e_A$ and $\min_v J^\e_A$ respectively; that is, we consider $\overline J_A$ and $\underline J_A$ given by 
\begin{equation}\label{eq:half_relaxed_A}
	\overline J_A(x,t): = \limsup \max_v J^\e_A(x, v,t) \ \text{ and }\ \underline J_A(x,t): = \liminf \min_v J^\e_A(x,v,t).
\end{equation}
We point out that $\max_v J_A^\e$ and $\min_v J_A^\e$ are uniformly bounded in $\e$ and do not depend on $v$.  Hence, their half-relaxed limits are well-defined.  Further, we note that we expect the limit of $J^\e_A$ to be independent of $v$ as fluctuations in $v$ will ``average out.'' This suggests that we lose no information in taking the maximum and minimum in $v$.
\smallskip

To state the next lemma we introduce some additional notation.  First, $J_{A,t}$ refers to the time derivative of $J_A$, and this notation applies similarly to other terms derived from $J_A$.  Second, given $g:\R^n\to \R$, $g^\star$ and $g_\star$ are respectively its upper and lower semicontinuous envelopes. Moreover, 
\[
{\bf 1}_A:=\begin{cases} 0 \ \ \text{on} \ \ G_0, \\[1.5mm]
-A \ \ \text{on} \ \ (\R^n \setminus \overline G_0).
\end{cases}
\]
\begin{lemma}\label{lem:sub_super}
Assume \eqref{assumption:G_0}. Then:

(i)~$\overline J_A$ is a (viscosity) sub-solution to 
\begin{equation}\label{eq:J_A2}
\begin{cases}	\max\left\{\overline J_{A,t} + H(D \overline J_A),
		\overline J_A,  - \overline J_A - A\right\} \leq 0 \ \  \text{in} \ \   \R^n\times \R_+,\\[1.5mm]
\overline J_A(\cdot,0)\leq ({{\bf 1}_A})^\star \ \text{on} \  \R^n;
\end{cases}
\end{equation}
(ii)~$\underline J_A$ is a (viscosity) super-solution to 
\begin{equation}\label{eq:J_A}
\begin{cases}	\max\left\{\underline J_{A,t} + H(D \underline J_A),
		\underline J_A, - \underline J_A  -A\right\} \geq 0 \ \  \text{in} \ \   \R^n\times \R_+,\\[1.5mm]
\underline J_A(\cdot,0)\geq  ({{\bf 1}_A})_\star \ \text{on} \  \R^n.
\end{cases}
\end{equation}

%
\end{lemma}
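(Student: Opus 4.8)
\emph{Strategy and easy bounds.}
The plan is to combine the perturbed test function argument of~\cite{BarlesEvansSouganidis} with the half-relaxed limits of~\cite{BarlesPerthame}, using the truncation at level $-A$ — and the obstacle term $-u-A$ it produces — as the substitute, in the spirit of~\cite{BarlesSouganidis}, for the a priori bounds that are unavailable because $J^\e$ takes the value $-\infty$. From $-A\le J^\e_A$ and, by~\eqref{takis140}, $J^\e_A\le\e\log M_\tau$, one gets at once $-A\le\underline J_A\le\overline J_A\le 0$, which accounts for the bound $\overline J_A\le 0$ and for all the obstacle terms in~\eqref{eq:J_A2} and~\eqref{eq:J_A}. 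The initial conditions follow from the sharp finite speed of propagation of~\eqref{takis1}: a Gr\"onwall argument along the characteristics $\dot x=a_{n,\tau}v$ forces $p^\e\equiv 0$ (hence $J^\e_A\equiv -A$) on the space-time cone of slope $a_{n,\tau}$ over any ball where $p_0$ vanishes, and, likewise, forces $p^\e$ to stay bounded below (hence $J^\e_A\to 0$) on such a cone over any ball where $p_0\ge c>0$; together with $-A\le\underline J_A\le\overline J_A\le 0$ this yields $\overline J_A(\cdot,0)\le({\bf 1}_A)^\star$ and $\underline J_A(\cdot,0)\ge({\bf 1}_A)_\star$.

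\emph{The subsolution inequality.}
Let $\phi\in C^1$ be such that $\overline J_A-\phi$ has a strict local maximum at $(x_0,t_0)$, $t_0>0$. If $\overline J_A(x_0,t_0)=-A$, then, since $\overline J_A\ge -A$ throughout, $\phi$ has a local minimum at $(x_0,t_0)$, so $D\phi(x_0,t_0)=0$, $\phi_t(x_0,t_0)=0$, and $\phi_t+H(D\phi)=H(0)=-1\le 0$. Otherwise $\overline J_A(x_0,t_0)>-A$, so near the relevant points $J^\e_A=J^\e$, a genuine solution of~\eqref{eq:the_equation}. Set $p:=D\phi(x_0,t_0)$ and let $\eta=\eta(\cdot;p)$ be the corrector~\eqref{takis101} when $\Phi(1)>\tfrac{\tau}{1+\tau}a_{n,\tau}|p|$ (automatic when $n\in\{1,2,3\}$) and the approximate corrector~\eqref{190} otherwise. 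Perturbing the strict maximum yields $\e_k\to 0$ and $(x_k,v_k,t_k)\to(x_0,v_0,t_0)$ such that $\phi+\e_k\eta$ touches $J^{\e_k}$ from above at $(x_k,v_k,t_k)$, with $v_k$ a maximizer of $v\mapsto J^{\e_k}(x_k,v,t_k)-\e_k\eta(v)$. At such a maximizer,
\[
\fint_{S^{n-1}}e^{\frac{1}{\e_k}(J^{\e_k}(x_k,v',t_k)-J^{\e_k}(x_k,v_k,t_k))}\,dv'\ \le\ e^{-\eta(v_k)}\fint_{S^{n-1}}e^{\eta(v')}\,dv',
\]
and, since $(1-\rho^\e)_+\le 1$, the viscosity subsolution inequality for $J^{\e_k}$ at $(x_k,v_k,t_k)$ gives
\[
\phi_t(x_k,t_k)+a_{n,\tau}v_k\cdot D\phi(x_k,t_k)\ \le\ \frac{1+\tau}{\tau}\,e^{-\eta(v_k)}\fint_{S^{n-1}}e^{\eta(v')}\,dv'-\frac1\tau .
\]
By the cell identity~\eqref{takis100} the right-hand side equals $-H(p)+a_{n,\tau}v_k\cdot p$ (in the approximate-corrector case, up to an error vanishing as $\e_k\to 0$ and then $\delta\to 0$); letting $\e_k\to 0$ and cancelling the terms $a_{n,\tau}v_k\cdot D\phi(x_k,t_k)$ and $a_{n,\tau}v_k\cdot p$, which share the limit $a_{n,\tau}v_0\cdot p$, one obtains $\phi_t(x_0,t_0)+H(p)\le 0$. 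With $\overline J_A\le 0$ and $\overline J_A\ge -A$, this is~\eqref{eq:J_A2}.

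\emph{The supersolution inequality.}
Let $\phi\in C^1$ be such that $\underline J_A-\phi$ has a strict local minimum at $(x_0,t_0)$, $t_0>0$. If $\underline J_A(x_0,t_0)=0$ the obstacle $\underline J_A\ge 0$ settles it, and if $\underline J_A(x_0,t_0)=-A$ the obstacle $-\underline J_A-A\ge 0$ settles it; so assume $-A<\underline J_A(x_0,t_0)<0$, whence $J^\e_A=J^\e$ near the relevant points. Perturbing the strict minimum with the same $\eta$ gives $\e_k\to 0$ and $(x_k,v_k,t_k)\to(x_0,v_0,t_0)$ with $\phi+\e_k\eta$ touching $J^{\e_k}$ from below at $(x_k,v_k,t_k)$, $v_k$ a minimizer of $v\mapsto J^{\e_k}(x_k,v,t_k)-\e_k\eta(v)$, and $J^{\e_k}(x_k,v_k,t_k)\to\underline J_A(x_0,t_0)<0$. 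The one new point is controlling the non-local reaction term: using only $(1-\rho^\e)_+\ge 0$, the viscosity supersolution inequality at $(x_k,v_k,t_k)$ gives
\[
\frac{\rho^{\e_k}(x_k,t_k)}{p^{\e_k}(x_k,v_k,t_k)}=\fint_{S^{n-1}}e^{\frac{1}{\e_k}(J^{\e_k}(x_k,v',t_k)-J^{\e_k}(x_k,v_k,t_k))}\,dv'\ \le\ \tau\big(\phi_t+a_{n,\tau}v_k\cdot D\phi\big)(x_k,t_k)+1,
\]
which is bounded, while $p^{\e_k}(x_k,v_k,t_k)=e^{J^{\e_k}(x_k,v_k,t_k)/\e_k}\to 0$; hence $\rho^{\e_k}(x_k,t_k)\to 0$ and $(1-\rho^{\e_k}(x_k,t_k))_+\to 1$. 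Feeding this back, together with the lower bound $\fint_{S^{n-1}}e^{\frac{1}{\e_k}(J^{\e_k}(v')-J^{\e_k}(v_k))}dv'\ge e^{-\eta(v_k)}\fint_{S^{n-1}}e^{\eta(v')}dv'$ valid at a minimizer, the viscosity inequality becomes in the limit $\phi_t(x_0,t_0)+H(p)\ge 0$, exactly as in the subsolution step (now with the full weight of the reaction term restored). This is~\eqref{eq:J_A}.

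\emph{The main obstacle.}
The genuinely delicate case is $\Phi(1)\le\tfrac{\tau}{1+\tau}a_{n,\tau}|p|$, which occurs only for $n>3$ with $|p|$ large and in which no exact corrector exists: one must use the regularized correctors~\eqref{190}, choosing $\mu$ so that $e^{\eta_{\mu,\delta}}$ is integrable over $S^{n-1}$, checking that the analogue of~\eqref{takis100} holds with an error controlled uniformly in $\e$, and taking the limits $\e\to 0$ then $\delta\to 0$ in that order so the errors disappear; the same regularization is needed on the borderline set $\Phi(1)=\tfrac{\tau}{1+\tau}a_{n,\tau}|p|$, where~\eqref{takis101} degenerates at the single velocity $v=-p/|p|$. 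A secondary technical point is the verification of the initial data, which is conceptually standard but depends on the precise finite-speed-of-propagation statement above.
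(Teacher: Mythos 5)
Your overall architecture --- truncation at $-A$, half-relaxed limits, perturbed test functions built from the correctors of \eqref{takis100}, and, in the supersolution step, the observation that boundedness of $\fint e^{(J^\e(x_\e,v',t_\e)-J^\e(x_\e,v_\e,t_\e))/\e}dv'$ together with $p^\e(x_\e,v_\e,t_\e)\to0$ forces $\rho^\e(x_\e,t_\e)\to0$, so that the reaction term contributes its full weight --- is exactly the paper's, and your argument is complete and correct whenever an exact corrector exists, i.e.\ whenever $\Phi(1)>\tfrac{\tau}{1+\tau}a_{n,\tau}|D\phi(x_0,t_0)|$, which covers all of $n\le 3$. (Your finite-speed-of-propagation route to the initial condition differs from the paper's relaxed initial-condition argument but is a legitimate alternative, modulo writing out the Gr\"onwall step.)

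The gap is in the case $\Phi(1)\le\tfrac{\tau}{1+\tau}a_{n,\tau}|p|$, $p:=D\phi(x_0,t_0)$ (so $n\ge4$), which you defer to your final paragraph with the plan ``check that the analogue of \eqref{takis100} holds with an error controlled uniformly in $\e$, then take $\e\to0$ and $\delta\to0$.'' No such approximate cell identity exists: in this regime $H(p)=-a_{n,\tau}|p|+\tfrac1\tau$ is \emph{not} characterized by \eqref{eq:integral_form}, and for $e^{\eta_\delta}\propto(1+\delta+v\cdot\hat p)^{-1}$ one has $\fint e^{\eta_\delta}dv\le\tfrac{\tau+1}{\tau a_{n,\tau}|p|}\Phi(1)\le1$, with a possibly strict inequality that does not vanish as $\delta\to0$. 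This one-sided defect is precisely what makes the \emph{subsolution} step work (it is the paper's \eqref{takis160}, which converts $\tfrac{\tau+1}{\tau}e^{-\eta_\delta(v_\e)}\fint e^{\eta_\delta}dv'$ into the upper bound $a_{n,\tau}|p|(1+\delta+v_\e\cdot\hat p)$), but for the \emph{supersolution} it points the wrong way: the resulting lower bound is weaker than $a_{n,\tau}|p|(1+v_\e\cdot\hat p)$ by the uncontrolled factor $\fint e^{\eta_\delta}dv'<1$, and the argument does not close. The paper's fix is a different family of correctors, $\eta_{\mu,\delta}(v)=\mu\log\bigl(1/(1+\delta+v\cdot\hat p)\bigr)$ as in \eqref{takis109}, together with a \emph{third} limit $\mu\to\mu_c^-=(n-1)/2$: since $\fint(1+v\cdot\hat p)^{-\mu}dv\to\infty$ as $\mu\to\mu_c$ while the left-hand side of \eqref{chris8} stays bounded, the limiting velocities are forced to concentrate at $-\hat p$, which is what produces $\phi_t(x_0,t_0)-a_{n,\tau}|p|+\tfrac1\tau\ge0$. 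You need this concentration mechanism (or an equivalent device); the two-parameter scheme you describe will not yield the supersolution inequality for large gradients when $n\ge4$.
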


The proofs of \eqref{eq:J_A2} and \eqref{eq:J_A} are similar. We separate into cases based on how the Hamiltonian is defined.  When $H$ is defined by the second case of~\eqref{takis30}, the proof is based on perturbing the test function by a small multiple of the corrector.  This method is classical, dating back to the work of Evans~\cite{Evans} (see also~\cite{MajdaSouganidis} for another early use and \cite{BouinCaillerie,BouinCalvezEikonal} for an application in a very similar context).  On the other hand, when $H$ is defined by the first case of~\eqref{takis30}, additional care is required because, as per the discussion in the previous subsection, exact correctors do not exist and a few more additional arguments are need in that case.  
This is where the proofs of~\eqref{eq:J_A2} and~\eqref{eq:J_A} differ.
\smallskip

The proofs of parts (i) and (ii) are quite long and involved, and for the readers convenience we separate them into two proofs.  We first show the proof of (i), which is slightly simpler.

\begin{proof}[Proof of \Cref{lem:sub_super}(i)]\ \smallskip \\
\textbf{\# The proof of~\eqref{eq:J_A2} for positive times:}
\smallskip

Since $\rho^\e \geq 0$,  it follows from  \eqref{eq:the_equation} that
$$J^\e_{t} + a_{n,\tau}  v\cdot DJ^\e \leq \frac{1}{\tau} \left (\fint_{S^{n-1}} e^{\frac{1}{\e}( J^\e(\cdot,v',\cdot) - J^\e(\cdot,v, \cdot))} dv'-1 \right) +  \fint_{S^{n-1}} e^{\frac{1}{\e}( J^\e(\cdot,v',\cdot)-J^\e(\cdot,v, \cdot))} dv'.$$ 
Noting that  $-A$ also solves the inequality above and using that the maximum of two sub-solutions is itself a sub-solution  implies that 
\begin{equation}\label{takis150}
J_{A,t}^\e + a_{n,\tau}  v\cdot DJ^\e_A \leq \frac{1}{\tau} \left (\fint_{S^{n-1}} e^{\frac{1}{\e}( J^\e_A(\cdot,v',\cdot) - J^\e_A(\cdot,v, \cdot))} dv'-1 \right) +  \fint_{S^{n-1}} e^{\frac{1}{\e}( J^\e_A(\cdot,v',\cdot)-J^\e_A(\cdot,v, \cdot))} dv'.
\end{equation}

Let $\phi$ be a smooth test function and assume that $\overline J_A - \phi$ has a strict maximum at $(x_0,t_0)$
and  $\phi(x_0,t_0) = \overline J_A(x_0,t_0).$
\smallskip

We first assume that $t_0>0$ and claim that 
\begin{equation}\label{takis106}
	\max\left\{ \phi_t(x_0,t_0) + H(D\phi(x_0,t_0) ),
		\overline J_A(x_0,t_0),   -\overline J_A(x_0,t_0) -A \right\} \leq 0.
\end{equation}

That $\max \left\{\overline J_A(x_0,t_0), - \overline J_A(x_0,t_0) -A\right\}\leq 0$ is an immediate consequence of the definition of $\overline J_A$ and \eqref{takis140}.
\smallskip
 
 Next we show that 
 \begin{equation} \label{takis141}
 \phi_t(x_0,t_0) + H(D\phi(x_0,t_0) )\leq 0.
 \end{equation}
 
%
To simplify the presentation and shorten some formulae in what follows we write
$$p_0:=D\phi(x_0,t_0)  \ \ \text{and} \ \ \hat p_0 :=\frac{D\phi(x_0,t_0)}{|D\phi(x_0,t_0)|}.$$
There are two cases.
\smallskip

\textbf{Case one:} If  
$$ \Phi(1) >  \frac{\tau }{1+\tau} a_{n,\tau}|p_0|,$$
we consider the perturbed test $\phi^\e=\phi + \e \eta(v; p_0)$ with $\eta$ given by \eqref{takis101}.
\smallskip

Let  $(x_\e,t_\e)$ be a maximum point  of $\max_vJ^\e_A - \phi^\epsilon$ in a neighborhood of $(x_0,t_0)$.  The use of such an approximating sequence in the theory of viscosity solutions is standard (see, for example,~\cite{Barles,LionsBook}).  In short, using the boundedness and continuity of $\eta$ along with the fact that $(x_0,t_0)$ is a strict local maximum of $\overline J_A - \phi$, its existence follows directly from the definition of $\limsup$.
\smallskip

Then there exist $v_\e \in S^{n-1}$ such that $(x_\e,v_\e,t_\e)$ is a maximum point of $J^\e_A -\phi^\e$ and  along a subsequence, which we denote the same way,  $\e\to 0$, there exists  $v_0\in S^{n-1}$ such that $(x_\e,v_\e, t_\e) \to (x_0, v_0, t_0)$ and $J^\e_A (x_\e,v_\e, t_\e) \to \overline J_A (x_0, t_0).$  We note that the existence of the limiting vector $v_0 \in S^{n-1}$ follows from the compactness of $S^{n-1}$.

\smallskip

It follows that, for $\e$ small enough and all $v \in S^{n-1}$ and all $(x,t)$ in a small ball $B_r(x_0,t_0)$ of radius $r >0$, 
\begin{equation}\label{eq:test_function_difference}
	\phi^\e(x,v,t) - \phi^\e(x_\e,v_\e, t_\e)
		\geq J^\e_A(x,v,t) - J^\e_A(x_\e,v_\e,t_\e).
		\end{equation}

Then \eqref{takis150} implies 
\begin{equation}\label{takis108}
	\phi_t^\e(x_\e,v_\e,t_\e) + a_{n,\tau} v_\e \cdot D\phi^\e (x_\e,v_\e,t_\e) +\frac{1}{\tau}
\leq \frac{\tau +1}{\tau}\fint_{S^{n-1}} e^{\eta(v')-\eta(v_\e)} dv'.
\end{equation}
%
%
Using now the definition of $\eta$ in \eqref{takis108} we find
\[ \phi_t^\e(x_\e,t_\e) + a_{n,\tau} v_\e \cdot D\phi^\e(x_\e,t_\e) \leq - H(D\phi(x_0,t_0)) + a_{n,\tau} v_\e\cdot D\phi(x_0,t_0),\]
which, after letting $\e \to 0$, gives \eqref{takis141}.
\smallskip

\textbf{Case two:} If 
$$ \Phi(1) \leq  \frac{\tau }{1+\tau} a_{n,\tau}|D\phi(x_0,t_0)|,$$
we observe that the previous argument cannot be repeated verbatim, since, as remarked on during the previous discussion, we do not have an exact corrector.  
We follow the same line of proof as above  but with an additional twist to deal with this difficulty.
\smallskip

For $\delta>0$, we consider the ``approximate'' corrector $\eta_\delta$ given by 
$$e^{\eta_\delta(v)} =\frac{\tau +1 }{\tau  a_{n,\tau}|p_0|} \frac{1}{1 + \delta + v\cdot \hat p_0},$$
and note that 
\begin{equation}\label{takis160}
\begin{split}  
\fint_{S^{n-1}} e^{\eta_\delta(v)}dv& =\frac{\tau +1}{\tau a_{n,\tau}|p_0|} \fint_{S^{n-1}} \frac{1}{1 + \delta + v\cdot \hat p_0}dv\\
& \leq \frac{\tau +1}{\tau  a_{n,\tau}|p_0|} \fint_{S^{n-1}} \frac{1}{1 + v\cdot \hat p_0}
=  \frac{\tau +1}{\tau a_{n,\tau}|p_0|} \Phi(1) \leq 1.
\end{split}
\end{equation}

Consider the perturbed test function $\phi^{\delta,\e}(x,v,t)=\phi (x,t) +\e\eta_\delta(v)$. 
Let  $(x_{\delta,\e},t_{\delta,\e})$ be a maximum point  of $\max_vJ^\e_A - \phi^{\delta, \epsilon}$ in a neighborhood of $(x_0,t_0)$.  Then there exists $v_{\delta,\e} \in S^{n-1}$ such that $(x_{\delta,\e},v_{\delta,\e},t_{\delta,\e})$ is a maximum point of $J^\e_A -\phi^{\delta,\e}$ and  along a subsequence, which we denote the same way,  $\e\to 0$, $(x_{\delta,\e},v_{\delta,\e},t_{\delta,\e}) \to (x_0, v_{\delta,0}, t_0)$ for some $v_{\delta,0} \in S^{n-1}$ and $J^\e_A (x_{\delta,\e},v_{\delta,\e}, t_{\delta,\e}) \to J_A^\tau (x_0, t_0).$ Note that the limit of the $(x_{\delta,\e}, t_{\delta,\e})$ is independent of $\delta$ since $(x_0,t_0)$ is a strict local maximum of $\overline J_A -\phi$. 
\smallskip

As before we have
\[
\phi_t^{\delta,\e} (x_{\delta,\e},t_{\delta,\e}) + a_{n,\tau} v_{\delta,\e} \cdot D\phi^{\delta,\e}(x_{\delta,\e},t_{\delta,\e}) +\frac{1}{\tau}
\leq \frac{\tau +1}{\tau}\fint_{S^{n-1}} e^{\eta_\delta(v')-\eta_\delta(v_{\delta,\e})} dv'.
\]

Using the definition of $\eta_\delta$ and \eqref{takis160}, we find 
\[
\phi_t(x_{\delta,\e},t_{\delta,\e}) + a_{n,\tau} v_{\delta, \e} \cdot D\phi(x_{\delta,\e},t_{\delta,\e}) +\frac{1}{\tau} \leq a_{n,\tau}|p_0| (1+\delta+ v_{\delta,\e} \cdot \hat p_0)
	= a_{n,\tau} \left(|p_0|(1+\delta) + v_{\delta,\e} \cdot p_0\right).
\]
After letting $\e\to 0$, we obtain
\[
	\phi_t(x_0,t_0) + a_{n,\tau} v_{\delta, 0} \cdot D\phi(x_0,t_0) +\frac{1}{\tau} \leq a_{n,\tau}\left(|p_0|(1+\delta)+ v_{\delta,0} \cdot p_0\right),
\]
and, hence, 
\[
	\phi_t(x_0,t_0) - (1+\delta) a_{n,\tau} |D\phi(x_0,t_0)| +\frac{1}{\tau} \leq 0,
\]
from which the claim follows after taking $\delta \to 0$.
\medskip

\textbf{\# The proof of~\eqref{eq:J_A2} at $t=0$:}
To conclude the proof of the sub-solution property, we consider the case $t_0 = 0$. Here the first conclusion  is that $\overline J_A$ satisfies the initial condition in the following weak sense:
\beq\label{takis171}
\min\left\{ \max\left\{ \overline J_{A,t} + H(D\overline J_A),  \overline J_A, - \overline J_A - A \right\}, \overline J_A - ({{\bf 1}_A})^\star \right\} \leq 0 \ \ \text{on} \ \  \R^n\times\{0\}.
 \eeq
Then an argument as in \cite{BarlesEvansSouganidis} implies that actually 
$$\overline J_A \leq ({\bf 1}_A)^\star \ \ \text{on} \ \  \R^n\times\{0\},$$
and hence the claim.
\smallskip

To prove \eqref{takis171} we assume that, for some smooth $\phi$,  
$\overline J_A - \phi$ has a strict maximum at $(x_0,0)$.  Then we argue as before, that is we consider the perturbed test function we used for the proof when $t_0>0$. If the maximum points at the level $\e$ are in $\R^n\times\R_+$ for infinitely many $\e$'s, we argue exactly as before and obtain 
$$\max\{ \phi_t(x_0,t_0) + H(D\phi(x_0,t_0)), -\overline J_A(x_0,t_0), - \overline J_A(x_0,t_0) - A\} \leq 0,$$
otherwise we have
 $$\overline J_A(x_0,t_0) \leq   ({\bf 1}_A)^\star,$$
 and, hence, the claim.
 \end{proof}

%
%
%
%
%

We now proceed with the proof of part (ii).

\begin{proof}[Proof of \Cref{lem:sub_super}(ii)]\ \smallskip \\
\textbf{\# The proof of~\eqref{eq:J_A} for positive times:}
Let $\phi$ be a smooth test function and assume that $\underline J_A - \phi$ has a strict minimum at $(x_0,t_0)$ and $\phi(x_0,t_0) = \underline J_A(x_0,t_0)$.
\smallskip

We claim that
\begin{equation}\label{chris1}
	\max\left\{ \phi_t(x_0,t_0) + H(D\phi(x_0,t_0) ),
		\underline J_A(x_0,t_0), -  \underline J_A(x_0,t_0) - A \right\} \geq 0.
\end{equation}
If $\underline J_A(x_0,t_0) = 0$ or $\underline J_A(x_0,t_0) = -A$ the claim is true.  Hence
we assume that $-A <\underline J_A(x_0,t_0) < 0$ and  show that
\begin{equation}\label{chris2}
	\phi_t(x_0,t_0) + H(D\phi(x_0,t_0)) \geq 0.
\end{equation}
Again, to simplify the presentation and shorten some formulae, we use as before the notation $p_0$ and $\hat p_0$ for $D\phi(x_0,t_0)$ and $D\phi(x_0,t_0)/|D\phi(x_0,t_0)|$ respectively.   There are two cases to consider.
%
\smallskip

\textbf{Case one:}  If  
$$ \Phi(1) >  \frac{\tau }{1+\tau} a_{n,\tau}|p_0|,$$
we use the perturbed test $\phi^\e=\phi + \e \eta(v; p_0)$ with $\eta$ given by \eqref{takis101}.
\smallskip

Let  $(x_\e,t_\e)$ be a minimum point  of $\min_vJ^\e_A - \phi^\epsilon$ in a neighborhood of $(x_0,t_0)$.  Then there exist $v_\e \in S^{n-1}$ such that $(x_\e,v_\e,t_\e)$ is a minimum  point of $J^\e_A -\phi^\e$ and,  along a subsequence, which we denote the same way,  $\e\to 0$, there exists $v_0\in S^{n-1}$ such that $(x_\e,v_\e, t_\e) \to (x_0, v_0, t_0)$ and $J^\e_A (x_\e,v_\e, t_\e) \to \overline J_A (x_0, t_0).$  We note that, for $\e$ sufficiently small, $-A < J^\e_A(x_\e, v_\e, t_\e) < 0$ and $J^\e_A(x_\e,v_\e,t_\e) = J^\e(x_\e,v_\e,t_\e)$, and, hence, the $J^\e(x_\e,v_\e,t_\e)$'s are bounded away from $0$.
\smallskip

Then \eqref{eq:the_equation} implies 
\begin{equation}\label{chris3}
\begin{split}
	\phi_t^\e(x_\e,v_\e,t_\e) + & a_{n,\tau} v_\e \cdot D \phi^\e (x_\e,v_\e,t_\e) +\frac{1}{\tau}\\
&\geq \left(\frac{1}{\tau} + \left(1 - \rho^\e(x_\e,t_\e)\right)_+\right)\fint_{S^{n-1}} e^{\frac{J^\e(x_\e,v',t_\e) - J(x_\e,v_\e,t_\e)}{\e}}dv'.
\end{split}
\end{equation}
Since the left hand side of \eqref{chris3} is bounded independently of $\e$, so must be  the integral term
\[
	\fint_{S^{n-1}} e^{(J^\e(x_\e,v',t_\e) - J(x_\e,v_\e,t_\e))/\e}dv'.\] 
Finally, note that
\[
	\rho^\e(x_\e,t_\e)
		= e^{\frac{J^\e(x_\e,v_\e,t_\e)}{\e}}\fint_{S^{n-1}}e^{\frac{J^\e(x_\e,v',t_\e)-J^\e(x_\e,v_\e,t_\e)}{\e}} dv'.
\]
Combining the last two observations and the formula above we conclude that 
\begin{equation}\label{takis180}
 \lim_{\e \to 0} \rho^\e(x_\e,t_\e) =0. 
\end{equation}
Returning to \eqref{chris3}, we obtain
\[
\begin{split}
	\phi_t^\e(x_\e,v_\e,t_\e) + & a_{n,\tau} v_\e \cdot D \phi^\e (x_\e,v_\e,t_\e) +\frac{1}{\tau}\\
&\geq \frac{1+\tau}{\tau}\fint_{S^{n-1}} 
e^{\frac{J^\e(x_\e,v',t_\e)-J^\e(x_\e,v_\e,t_\e)}{\e}} dv' + \text{o}(1).
\end{split}
\]
Using now that $(x_\e,v_\e,t_\e)$ is a minimum point of $J_A^{\e} - \phi^\e$, we get
\begin{equation}\label{chris7}
	\phi^\e_t(x_\e,v_\e,t_\e) + a_{n,\tau} v_\e \cdot D \phi^\e(x_\e,v_\e,t_\e) +\frac{1}{\tau}
\geq \frac{1+\tau}{\tau}\fint_{S^{n-1}} e^{\eta(v')-\eta(v_\e)} dv' + \text{o}(1).
\end{equation}

The definition of $\eta$ in \eqref{takis101} then yields 
\[ \phi_t^\e(x_\e,t_\e) + a_{n,\tau} v_\e \cdot D\phi^\e(x_\e,t_\e)  \geq -H(D\phi(x_0,t_0)) + a_{n,\tau}v_\e\cdot D\phi(x_0,t_0) + \text{o}(1),\]
which, after letting $\e \to 0$, gives the desired inequality.
\smallskip

\textbf{Case two:} If  
$$ \Phi(1) \leq  \frac{\tau }{1+\tau} a_{n,\tau}|p_0|,$$ the argument needs to be modified.  We recall that this may only occur when $n \geq  4$.
\smallskip

We consider, for $\mu \in (0, \mu_c)$, with $\mu_c:= (n-1)/2$, and for $\delta > 0$, the ``approximate'' corrector
\begin{equation}\label{takis109}
\eta_{\mu,\delta} (v)= \mu \log\left(\frac{1}{1+\delta + v\cdot \hat p_0}\right).
\end{equation}
The perturbed test function is now $\phi^{\mu,\delta, \e}(x,v,t):=\phi (x,t) + \e \eta_{\mu,\delta} (v)$. 
As in the previous case, let $(x_{\mu,\delta,\e}, t_{\mu,\delta,\e})$ be local minima of $\min_v J_A^\e-\phi^{\mu,\delta,\e}$.  Then there exists $v_{\mu,\delta,\e} \in S^{n-1}$ such that $(x_{\mu,\delta,\e}, v_{\mu,\delta,\e}, t_{\mu,\delta,\e})$ is a minimum point of $J_A^\e - \phi^{\mu,\delta,\e}$.  Further, along a subsequence, which we denote the same way, $\e \to 0$, there exists $v_{\mu,\delta,0}\in S^{n-1}$ such that $(x_{\mu,\delta,\e}, v_{\mu,\delta,\e}, t_{\mu,\delta,\e})$ converges to $(x_0,v_{\mu,\delta,0},t_0)$ and $J_A^\e(x_{\mu,\delta,\e}, v_{\mu,\delta,\e}, t_{\mu,\delta,\e}) \to \underline J_A(x_0,t_0).$  That $(x_{\delta,\e}, t_{\delta,\e}) \to (x_0, t_0)$ as $\e\to 0$ is a consequence of the fact that $(x_0,t_0)$ is a strict minimum of $\underline J_A-\phi.$ 
\smallskip

The fact that \eqref{takis180} holds is proved as before. Moreover,  as in~\eqref{chris7}, we find
\begin{equation}\label{chris4}
\begin{split}
	\phi^{\mu,\delta, \e}_t(x_{\mu,\delta, \e},v_{\mu,\delta, \e},t_{\mu,\delta, \e}) &+ a_{n,\tau} v_{\mu,\delta, \e} \cdot D \phi^{\mu,\delta, \e}(x_{\mu,\delta, \e},v_{\mu,\delta, \e},t_{\mu,\delta, \e}) + \frac{1}{\tau} + \text{o}(1)\\
		&\geq \frac{1+\tau}{\tau} \fint_{S^{n-1}} e^{ \eta_{\mu,\delta}(v') - \eta_{\mu,\delta}(v_{\mu,\delta, \e})} dv'.
\end{split}
\end{equation}
Letting $\e \to 0$ in~\eqref{chris4} yields
\begin{equation}\label{chris5}
	\phi_t(x_0,t_0) + a_{n,\tau} v_{\mu,\delta,0} \cdot D \phi(x_0,t_0) + \frac{1}{\tau}
		\geq \frac{1+\tau}{\tau} \fint_{S^{n-1}} e^{ \eta_{\mu,\delta}(v') - \eta_{\mu,\delta}(v_{\mu,\delta,0})} dv'.
\end{equation}

Using now the definition of $\eta_{\mu,\delta}$, we rewrite \eqref{chris5} as 
\begin{equation}\label{chris6}
\begin{split}
	\phi_t(x_0,t_0) + a_{n,\tau} v_{\mu,\delta,0} \cdot D \phi(x_0,t_0) + \frac{1}{\tau}
	\geq \frac{1+\tau}{\tau}\left(1 + \delta + v_{\mu,\delta,0}\cdot \hat p_0 \right)^\mu \fint_{S^{n-1}} \frac{dv'}{\left(1 + \delta + v' \cdot \hat p_0\right)^\mu}.
\end{split}
\end{equation}
Along a subsequence, which we denote in the same way, $\delta \to0$, it follows that $v_{\mu,\delta,0} \to v_{\mu,0,0}$ for some $v_{\mu,0,0} \in S^{n-1}$, and 
\begin{equation}\label{chris8}
	\phi_t (x_0,t_0) + a_{n,\tau} v_{\mu,0,0} D\phi(x_0,t_0) + \frac{1}{\tau}
		\geq \frac{1+\tau}{\tau} \left(1 + v_{\mu,0,0} \cdot\hat p_0\right)^\mu \fint_{S^{n-1}} \frac{dv'}{(1 + v'\cdot \hat p_0)^\mu} dv'.
\end{equation}
It shown in the Appendix that 
\[
\lim_{\mu\to \mu_c} \fint_{S^{n-1}} \frac{dv'}{(1 + v'\cdot \hat p_0)^\mu}dv'=\infty,
\]
while the left hand side of \eqref{chris8} is bounded independently of $\mu$.  It follows that $v_{\mu,0,0} \to - \hat p_0$ as $\mu \to \mu_c$.  
\smallskip

 Hence, 
\[
	\phi_t (x_0,t_0) - a_{n,\tau} |D\phi(x_0,t_0)| + \frac{1}{\tau}
		\geq \liminf_{\mu\to\mu_c} \frac{1+\tau}{\tau} \left(1 + v_{\mu,0,0} \cdot \hat p_0\right)^\mu \fint_{S^{n-1}} \frac{dv'}{(1 + v'\cdot \hat p_0)^\mu}
		\geq 0,
\]
and the proof is now complete in the case that $t_0 = 0$.
\medskip

\textbf{\# The proof of~\eqref{eq:J_A} at $t=0$:}  The case $t_0=0$ is treated very similarly to the work above; see the proof of the sub-solution property.   As such, we omit it.  The proof is now complete.
\end{proof}

From the above, we now easily show that $J^\e$ converges uniformly to the solution of~\eqref{eq:J}. 
 In other words, we establish \Cref{thm:phi_convergence}.

\begin{proof}[Proof of~\Cref{thm:phi_convergence}]
By construction, $\underline J_A \leq \overline J_A$.  Also, since $\underline J_A$ is a super-solution to an equation that $\overline J_A$ is a sub-solution to, it follows that $\underline J_A \geq \overline J_A$, by the comparison principle (see Crandall, Lions and Souganidis~\cite{CrandallLionsSouganidis}).  Hence, we have that $\underline J_A = \overline J_A$.  This, in turn, implies that $J^\e_A$ converges locally uniformly in $(x,t)$ and uniformly in $v$ to $J_A^\tau = \underline J_A = \overline J_A$, which does not depend on $v$ and which solves~\eqref{eq:J_A}.
\smallskip

On the other hand, it is easily seen that $\underline J_A = \inf\{-A, \underline J\}$ and $\overline J_A = \inf\{ - A, \overline J\}$.  Letting $A \to \infty$, we see that $J^\e$ converges locally uniformly to a limit $J$ which solves~\eqref{eq:J}.
\end{proof}

\section{The speed of the moving front}\label{sec:speeds}

\subsection*{Propagation speed when $n>1$}

\begin{proof}[Proof of \Cref{prop:speedn}]
When $n>3$, the fact that $H(p)/|p|$ attains its maximum follows from the observation that, for large $|p|$,
$$\frac{H(p)}{|p|}=\frac{1}{\tau |p|} - a_{n,\tau} > -a_{n,\tau} =\lim_{|p|\to \infty}\frac{H(p)}{|p|}.$$
This last observation also yields that  $c_{n,\tau} <a_{n,\tau}.$
\smallskip

SWhen $n=2, 3$, $H$ is  defined  by the second alternative  in~\eqref{takis30} and the argument above can not be used. Instead we compute the Hamiltonians in order to conclude that $H(p)/|p|$ attains a maximum.
\smallskip

We first consider the case $n=2$.  Using \eqref{takis230},  \eqref{takis30} and the fact that, as shown in the Appendix, for $s\geq 1$,
\begin{equation}\label{takis250}
\Phi(s)=\frac{1}{\sqrt{s^2-1}},
\end{equation} 
we find 
\begin{equation}\label{eq:2d_Hamiltonian}
	H(p) = \frac{1}{\tau}- \sqrt{\left(\frac{1+\tau}{\tau}\right)^2 + \frac{2}{\tau} |p|^2}.
\end{equation}

%
Next we consider the case $n=3$.  Using again  \eqref{takis230},  \eqref{takis30} and the fact that, as shown in the Appendix, 
for $s>1$,
\beq\label{takis251}
\Phi(s)=\frac{1}{2} \log\left( \frac{s + 1}{s-1}\right),
\eeq
we obtain
\begin{equation}\label{eq:3d_Hamiltonian}
	H(p)
		= \frac{1}{\tau} - \frac{\sqrt 3 |p|}{\sqrt \tau} \frac{e^{\frac{2\sqrt{3\tau}}{1+\tau}|p|} + 1}{e^{\frac{2\sqrt{3\tau}}{1+\tau}|p|} - 1}.
\end{equation}
In either case the claim follows after some straightforward calculations. 

\smallskip

In view of the discussion in the introduction, to conclude the proof it suffices to show that, for each $e\in S^{n-1}$, $L(c_{n,\tau}e)=0$.
\smallskip


Let $p_c \in \R^n\setminus\{0\}$ be a maximizing vector in the definition of $c_{n,\tau}$, that is,
$$c_{n,\tau} = -\frac{H(p_c)}{|p_c|};
$$
note that, in view of the isotropy of $H$,  any rotation of $p_c$ is also a maximizing vector. Thus  we may assume that $e \cdot p_c = -|p_c|$.  
\smallskip

Since, for $q\in \R^n$,
\[
	L(q) = \inf_{p \in \R^n}\left(q\cdot p - H(p)\right),
\]
it is immediate that
\[
	L(c_{n,\tau} e) \leq \left( - c_{n,\tau} |p_c| - H(p_c)\right)
		= 0.
\]
Rewriting the definition of $L$, we obtain
\begin{equation}\label{chris103}
L(c_{n,\tau} e)
		= c_{n,\tau}\inf_{p \in \R^n}|p| \left(e \cdot \hat p - \frac{1}{c_{n,\tau}}\frac{H(p)}{|p|}\right).
\end{equation}

If $L(c_{n,\tau} e) < 0$, then there exists $p\in \R^n\setminus \{0\}$ such that
\[L(c_{n,\tau} e)\leq c_{n,\tau} |p| \left(-1- \frac{1}{c_{n,\tau}}\frac{H(p)}{|p|}\right) < 0.\]
This implies that
\[
	- c_{n,\tau} < \frac{H(p)}{|p|},
\]
which contradicts the definition of $c_{n,\tau}$. 
 We conclude that $L(c_{n,\tau}e)=0$.
\end{proof}

%
%
%

\subsection*{Explicit formulas for the speed for $n=1$ and $n=2$.} 

\begin{proof}[Proof of \Cref{prop:speed12}]
We begin with the speed when $n=2$ since the proof is simple.  Using \Cref{prop:speedn} along with~\eqref{eq:2d_Hamiltonian}, find 
\[
	c_{2,\tau}
		= - \max_{p\in\R^2} \frac{H(p)}{|p|}
		= - \max_{p\in\R^2} \left(\frac{1}{\tau |p|} - \sqrt{\left(\frac{1+\tau}{\tau|p|}\right)^2 + \frac{2}{\tau}}\right).
\]
Elementary calculus yields that  the maximum is attained when $|p| =  (1+\tau)\sqrt{2+\tau}/\sqrt{2}$, giving
\[
	c_{2,\tau} = \frac{\sqrt{2(2+\tau)}}{1+\tau}.
\]

We now consider the case $n=1$. Let $\alpha = -H(p) +\tau^{-1}$. It follows from \eqref{takis30} and the fact that $S^0 = \{-1,1\}$ that, for any $p \in \R\setminus \{0\}$, 
\[
	\frac{\frac{\alpha}{a_{n,\tau}|p|}}{\frac{\alpha^2}{a_{1,\tau}^2 |p|^2} - 1} = \frac{\tau}{1+\tau}a_{1,\tau} |p|, 
\]
and after rearranging this 
\[
	\alpha = \frac{1+\tau}{2\tau} + \sqrt{\left(\frac{1+\tau}{2\tau}\right)^2 + \frac{1}{\tau} |p|^2};
\]
that is
\begin{equation}\label{eq:1d_HJ}
	H(p)
		= \frac{1-\tau}{2\tau} - \sqrt{\left(\frac{1+\tau}{2\tau}\right)^2 + \frac{1}{\tau} |p|^2}.
\end{equation}
Using elementary calculus, it is easy to see that $c_{1,\tau}$ is given by the formula in~\Cref{prop:speed12}.
\smallskip


The definition of the concave dual and elementary calculus now yields
\[
	L(q) =	\begin{cases}
		-\frac{1-\tau}{2\tau} + \frac{1+\tau}{2\tau}\sqrt{1 - \tau |q|^2}, \qquad &\text{ if } \tau |q|^2 \leq 1,\\
		-\infty, &\text{ otherwise}.
	\end{cases}
\]
\smallskip

If $\tau < 1$, $L(q) = 0$ if and only if $|q| = 2/(1+\tau)$.  In view of~\eqref{chris36}, this implies that $c_{1,\tau} = 2/(1+\tau)$.
\smallskip

If $\tau \geq 1$, there is no $q \in \R$ such that $L(q) = 0$.  In this case, we notice that $L(q) > 0$ when $\tau |q|^2 \leq 1$ and $L(q) < 0$ otherwise.  Hence $\partial\{q \in \R: L(q) < 0\} = \{q \in \R: |q| = 1/\sqrt\tau\}$.  In view of~\eqref{chris105}, this implies that $c_{1,\tau} = 1/\sqrt\tau$, finishing the proof.
\end{proof}

\section{The  limit $\tau \to 0$}\label{sec:tau}

Let  $J^\tau$ be  the solution to~\eqref{eq:J}, recall that   $J^\tau \leq 0$, and define
\[
	J_A^\tau := \max\{J^\tau, -A\}. 
\]
Consider the  half-relaxed limits
\begin{equation}\label{chris10}
	\overline z_A := \limsup_{\tau \to 0} J_A^\tau
		\qquad \text{ and }\qquad
		\underline z_A := \liminf_{\tau \to 0} J_A^\tau,
\end{equation}
and note that, by construction,
\begin{equation}\label{chris100}
	-A \leq \underline z_A \leq \overline z_A \leq 0.
\end{equation}
\smallskip

We prove the following lemma.
\begin{lemma}\label{lem:tau_half_relaxed}
	Let $J^\tau$ solve~\eqref{eq:J} and define $\overline z_A$ and $\underline z_A$ by~\eqref{chris10}. Then\\
	(i) $\underline z_A$ is a super-solution to
	\begin{equation}\label{eq:z_A}
		\max\left\{ \underline z_{A,t} - |D \underline z_A|^2 -1, \underline z_A, -A-\underline z_A\right\} \geq 0 \   \text{ in } \  \R^n \times \R_+ \quad   \underline z_A(\cdot, 0) \geq ({{\bf 1}_A})_\star \  \text{ on } \ \R^n.
	\end{equation}
	(ii)  $\overline z_A$ is a sub-solution to
	\begin{equation}\label{eq:z_A2}
		\max\left\{ \overline z_{A,t} - |D \overline z_A|^2 -1, \overline z_A, -A-\overline z_A\right\} \leq 0 \ \text{ in } \ \R^n \times \R_+, \quad 
		\overline z_A(\cdot, 0) \leq ({{\bf 1}_A})^\star  \ \text{ on } \ \R^n.
	\end{equation}
\end{lemma}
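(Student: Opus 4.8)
The plan is to pass to the limit $\tau \to 0$ in the variational inequality~\eqref{eq:J_A} (respectively~\eqref{eq:J_A2}) satisfied by $J_A^\tau$, using the half-relaxed limits and the stability of viscosity solutions. The key analytic input is the convergence of the Hamiltonian $H = H^\tau$ to the limiting Hamiltonian $-|\cdot|^2 - 1$ as $\tau \to 0$; this should be extracted from the explicit formula, or rather from the defining relation~\eqref{eq:integral_form}. Concretely, I would first establish that $H^\tau(p) \to -|p|^2 - 1$ locally uniformly in $p$ as $\tau \to 0$. Writing $\beta_\tau := -H^\tau(p) + \tfrac1\tau$ so that, by~\eqref{eq:integral_form} and the substitution $s = \beta_\tau/(a_{n,\tau}|p|)$, one has $\tfrac{\tau}{1+\tau} = \tfrac{1}{a_{n,\tau}|p|}\Phi\big(\tfrac{\beta_\tau}{a_{n,\tau}|p|}\big)$, and $a_{n,\tau} = \sqrt{n/\tau}$, I would expand $\Phi(s) = \fint_{S^{n-1}} (s + v_1)^{-1}dv = \tfrac1s + \tfrac{1}{s^3}\fint v_1^2\, dv + O(s^{-5}) = \tfrac1s + \tfrac{1}{n s^3} + O(s^{-5})$ for $s$ large (note $\fint_{S^{n-1}} v_1^2\, dv = 1/n$). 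Matching orders in $\tau$ then forces $\beta_\tau = \tfrac1\tau + |p|^2 + o(1)$, hence $H^\tau(p) = -|p|^2 + o(1)$ — but this contradicts the asserted $H^\tau(0) = -1$ limit unless we are careful: in fact $H^\tau(0) = -1$ for every $\tau$, and the correct expansion should give $H^\tau(p) \to -|p|^2 - 1$; I would redo the matching keeping the $O(s^{-5})$-type and the $\tfrac{1}{1+\tau}$ corrections, which contribute the additional $-1$. (The factor $\sqrt n$ in $a_{n,\tau}$ is exactly what makes the $|p|^2$ coefficient equal to $1$ rather than $n$, matching the remark after Proposition~\ref{prop:tau}.)

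Granting the locally uniform convergence $H^\tau \to -|\cdot|^2 - 1$, the proof of (i) proceeds as follows. By~\eqref{chris100}, $\underline z_A$ is well-defined, bounded, and lower semicontinuous. Let $\phi$ be smooth with $\underline z_A - \phi$ having a strict local minimum at $(x_0,t_0)$ with $t_0 > 0$; by the standard argument there exist $\tau_k \to 0$ and minimum points $(x_k,t_k) \to (x_0,t_0)$ of $J_A^{\tau_k} - \phi$ with $J_A^{\tau_k}(x_k,t_k) \to \underline z_A(x_0,t_0)$. Since $J_A^{\tau_k}$ is a super-solution to~\eqref{eq:J_A} with $H = H^{\tau_k}$, we get
\[
\max\big\{ \phi_t(x_k,t_k) + H^{\tau_k}(D\phi(x_k,t_k)),\ J_A^{\tau_k}(x_k,t_k),\ -J_A^{\tau_k}(x_k,t_k) - A \big\} \geq 0.
\]
Passing $k \to \infty$, using continuity of $\phi$ and its derivatives and the locally uniform convergence of $H^{\tau_k}$, yields $\max\{\phi_t(x_0,t_0) - |D\phi(x_0,t_0)|^2 - 1,\ \underline z_A(x_0,t_0),\ -\underline z_A(x_0,t_0) - A\} \geq 0$, which is the required super-solution inequality. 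The initial condition $\underline z_A(\cdot,0) \geq ({\bf 1}_A)_\star$ follows because $J^\tau(\cdot,0) = {\bf 1}_A$ identically in $\tau$ (both equal $0$ on $G_0$ and $-A$ on $\R^n\setminus\overline G_0$ after the $\max$ with $-A$), so the half-relaxed liminf at $t = 0$ is bounded below by $({\bf 1}_A)_\star$; this can also be argued exactly as in the $t_0 = 0$ case of Lemma~\ref{lem:sub_super}. The proof of (ii) is entirely symmetric, using the sub-solution property~\eqref{eq:J_A2} of $J_A^\tau$ and the upper half-relaxed limit.

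The main obstacle is the convergence $H^\tau \to -|\cdot|^2 - 1$, locally uniformly in $p$ and in particular uniformly down to $p = 0$, together with handling the dichotomy in the definition~\eqref{takis30}: for fixed $p \neq 0$ and $\tau$ small, one has $\tfrac{\tau}{1+\tau} a_{n,\tau}|p| = \tfrac{\sqrt{n\tau}}{1+\tau}|p| \to 0 < \Phi(1)$ (when $n \in \{1,2,3\}$ this is automatic since $\Phi(1) = \infty$; when $n \geq 4$ it holds for $\tau$ small), so we are always eventually in the second branch and the implicit relation~\eqref{eq:integral_form} applies — but one must check the asymptotics are uniform on a neighborhood of $p = 0$, where the relation degenerates. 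I would handle the neighborhood of $p = 0$ by a separate soft argument: $H^\tau$ is concave with $H^\tau(0) = -1$ and, from~\eqref{eq:DpH}-type bounds, $|D_p H^\tau| \leq a_{n,\tau}$ fails to be uniform — so instead I would use that $H^\tau$ is nonincreasing in $|p|$ and sandwiched between its value at $0$ and the explicit large-$|p|$ behavior, then invoke a compactness/monotonicity argument (Dini-type) to upgrade pointwise convergence away from $0$ to locally uniform convergence including $0$. Alternatively, and more cleanly, one can avoid uniformity near $p=0$ altogether: in the viscosity argument only the value $H^{\tau_k}(D\phi(x_k,t_k))$ at points $D\phi(x_k,t_k) \to D\phi(x_0,t_0)$ is needed, so pointwise convergence of $H^\tau$ combined with a local Lipschitz bound on $H^\tau$ uniform for $\tau$ in a neighborhood of $0$ (which does hold away from $p = 0$, and near $p=0$ one uses concavity and $H^\tau \le H^\tau(0) = -1$) suffices. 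I expect the bookkeeping in the expansion of $\Phi$ to produce the exact constant $-1$ to be the fiddliest part, but it is routine once organized.
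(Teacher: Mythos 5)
Your proposal is correct and follows essentially the same route as the paper: half-relaxed limits in $\tau$, reduction to the case $\underline z_A(x_0,t_0)\in(-A,0)$, and identification of the limit Hamiltonian $-|p|^2-1$ from the implicit relation~\eqref{eq:integral_form} via the expansion of $\Phi$ and the identity $\fint_{S^{n-1}} n(v\cdot p)^2\,dv=|p|^2$, with the $\tfrac{\tau}{1+\tau}$ factor supplying the $-1$. The only difference is organizational: rather than first proving locally uniform convergence of $H^\tau$ (the uniformity near $p=0$ you spend a paragraph on), the paper simply notes that for small $\tau$ one is always in the second branch of~\eqref{takis30}, rewrites the viscosity inequality $\phi_t+H^\tau(D\phi)\ge 0$ at the touching points as $\tfrac{\tau}{1+\tau}\ge\fint_{S^{n-1}}\bigl(\tfrac1\tau+\phi_t+a_{n,\tau}v\cdot D\phi\bigr)^{-1}dv$ using the monotonicity of $\Phi$, and Taylor-expands that integral directly, which sidesteps the uniform-convergence bookkeeping entirely.
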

We momentarily postpone the proof of Lemma~\ref{lem:tau_half_relaxed} to note  that the comparison principle yields  $\overline z_A = \underline z_A$. Letting then   $A\to \infty$ we obtain \Cref{prop:tau} from \Cref{lem:tau_half_relaxed}.  The argument follows exactly as in \Cref{lem:sub_super},  so we omit it.

\begin{proof}[Proof of \Cref{lem:tau_half_relaxed}]
We prove only the claim for $\underline z_A$.  The argument is similar for $\overline z_A$ with similar modifications as in the proof of \Cref{lem:sub_super}.  
\smallskip

Fix any smooth test function $\phi$ and any point $(x_0,t_0)$ such that $\underline z_A - \psi$ has a strict local minimum of zero at $(x_0,t_0)$.
\smallskip

Assume that $t_0>0$.  If $\underline z_A(x_0,t_0) = 0$ or $-A$ the claim is immediate from~\eqref{eq:z_A}. Hence,  
we need only consider the case where $\underline z_A(x_0,t_0) \in (-A,0)$ and  aim to show that
\[
	0 \leq \phi_t(x_0,t_0) - |D \phi(x_0,t_0)|^2 - 1.
\]
It follows from  the definition of $\underline z_A$  that, as $\tau \to 0$,  there exists a sequence $(x_\tau, t_\tau) \to  (x_0,t_0)$ such that $J^{\tau}_A - \phi$ has a local minimum at $(x_\tau,t_\tau)$ and  $J_A^\tau(x_\tau,t_\tau) - \phi(x_\tau,t_\tau) \to \underline z_A(x_0,t_0) - \phi(x_0,t_0)$. 
In addition, since  $\underline z_A(x_0,t_0) \in (-A,0)$, $J_A^\tau(x_\tau,t_\tau) \in (-A,0)$ for all $\tau$ sufficiently small.  
\smallskip

Using the definition of $H^\tau$ and that $J^\tau$ satisfies~\eqref{eq:J}, when $\tau$ is sufficiently small, we obtain, at $(x_\tau,t_\tau)$,
\[
	\frac{\tau}{1+\tau} - \fint_{S^{n-1}} \frac{dv}{\frac{1}{\tau} + \phi_t(x_\tau,t_\tau) + a_{n,\tau} v \cdot D \phi(x_\tau,t_\tau)} \geq 0.
\]
We point out that we need only use this form of the Hamiltonian $H$~\eqref{takis30} since, for any fixed $p$, $\Phi(1) > \sqrt{n\tau}|p|/(1+\tau)$ when $\tau$ is small enough. 
\smallskip

Dividing both sides by $\tau$ and using Taylor's theorem on the integrand, we see that, at $(x_\tau,t_\tau)$,
\[
	0 \leq \frac{1}{1+\tau}
		- \fint \left(1 - \tau \phi_t(x_\tau,t_\tau) - \sqrt{\tau n} v\cdot D \phi(x_\tau,t_\tau)
			+ n \tau \left(v\cdot D \phi(x_\tau,t_\tau) \right)^2 + \text{o}(\tau)\right).
\]
The third term under the integral vanishes after integrating over $S^{n-1}$ as it is odd in $v$, while, a calculation in the Appendix,  yields that 
\begin{equation}\label{eq:v_squared}
	\fint_{S^{n-1}} n (v\cdot D \phi(x_\tau,t_\tau))^2 dv = |D \phi(x_\tau,t_\tau)|^2.
\end{equation}
Applying Taylor's theorem again, it is now immediate that, at $(x_\tau,t_\tau)$,
\[
	0 \leq (1 - \tau + \text{o}(\tau)) - 1 + \tau \phi_t(x_\tau,t_\tau) - \tau |D \phi(x_\tau,t_\tau)|^2 + o(\tau),
\]
which, after dividing by $\tau$ and letting $\tau \to 0$, yields
\[
	0 \leq \phi_t(x_0,t_0) - |D \phi(x_0,t_0)|^2 - 1.
\]
The case when $t_0=0$ may be easily handled by a combination of the methods above and in \Cref{sec:proofs}.
\end{proof}

\section{The failure of positivity}\label{sec:positivity}

\subsection*{The reactive-telegraph equation: proof of~\Cref{thm:non_positive_telegraph} when $n = 1$}

We prove that the reactive-telegraph equation preserves positivity when $n=1$.  
We note that the upper bound in the statement of \Cref{thm:non_positive_telegraph} is clearly not sharp.  Due to the lack of maximum principle and the structure of the equation~\eqref{eq:reactive_telegraph}, the upper and lower bounds must be obtained simultaneously.  Since our main interest is in the lower bound, we do not optimize the  proof of the upper bound and use $2$ since it is sufficient to obtain the desired lower bound.

\begin{proof}[Proof of \Cref{thm:non_positive_telegraph} when $n=1$]
Before beginning we note that it is enough to prove the claim with nonlinearity  $ \rho(1-\rho)$, since $\rho(1-\rho) \leq \rho(1-\rho)_+$.  The key tool is the equivalence between~\eqref{eq:reactive_telegraph} and~\eqref{eq:the_prelim_equation} in one spatial dimension, which may also be written as~\eqref{eq:kinetic_system1}.  That is, let $p^\pm$ solve~\eqref{eq:kinetic_system1} with $p^+(\cdot,0) = p^-(\cdot,0) = \rho_0$, then $\rho = (p^+ + p^-)/2$.  Hence, if we show that $0 \leq p^+, p^- \leq 2$, the result follows for $\rho$.
\smallskip


Fix $\epsilon>0$.  By approximation, we may assume that $p^\pm$ are smooth and uniformly equal to $2\sqrt \e$ outside a compact set and that the initial data satisfy $2\sqrt\e \leq \rho_0 \leq 1$.  We define
\[
	p^+_\e(x,t) = p^+(x,t) + \e t \qquad \text{ and } \qquad
	p^-_\e(x,t) = p^-(x,t) + \e t,
\]
and let
\[
	T_\e\
		= \sup\{t\in(0,1/\sqrt\e) : \sqrt \e < p^+_\e(x,s), p^-_\e(x,s) < 2 - \sqrt \e \text{ for all } s \in (0,t)\}.
\]
Here, $T_\e$ is the first time that either $p^+_\e$ or $p^-_\e$ ``touches'' $\sqrt \e$ or $2-\sqrt \e$.  It is well-defined and positive due to the smoothness of $\rho_0$ that is inherited by $p^+_\e$ and $p^-_\e$.
\smallskip

Our goal is to show that $T_\e = 1/\sqrt \e$ for every $\e>0$.  Once we have shown this, the bounds on $p^+$ and $p^-$ follow by taking $\e\to0$.  To this end, we proceed by contradiction and assume that $T_\e \in (0,1/\sqrt\e)$.
\smallskip

Let $x_\e\in\R$ be such that $p^+_\e(x_\e, T_\e)$ is either $\sqrt \e$ or $2-\sqrt \e$. The argument follows similarly if $p^-_\e(x_\e,T_\e)$ is either $\sqrt \e$ or $2-\sqrt \e$.
\smallskip

Before continuing with the proof, we briefly justify the existence of $x_\e$.  Let $R_\e>0$ be sufficiently large so that $\rho_0\equiv 2\sqrt\e$ on $B_{R_\e}^c$.  Since \eqref{eq:kinetic_system1} has no space or time dependence, the finite speed of propagation for kinetic equations implies that $p^\pm_x \equiv 0$ on $B_{R_\e+(\tau \e)^{-1/2}}^c \times [0,1/\sqrt \e]$.  As such,~\eqref{eq:kinetic_system1} reduces to an ordinary differential equation in $t$ at every point $x\in B_{R_\e+(\tau\e)^{-1/2}}^c$.  The ODE is the same for both $p^+$ and $p^-$, and, hence,  the $p^+$ and $p^-$ are equal on $B_{R_\e+(\tau\e)^{-1/2}}^c$.  This, in turn, implies that $p^+ = p^- = \rho$ on $B_{R_\e+(\tau\e)^{-1/2}}^c$, which, from~\eqref{eq:kinetic_system1}, implies that
\[
	\rho_t = \rho(1-\rho) \qquad\text{ with }\qquad \rho_0(x) = 2\sqrt\e
\]
on $B_{R_\e+(\tau\e)^{-1/2}}^c\times[0,1/\sqrt\e]$.  At this point, it is clear that $p_\e^+$ and $p_\e^-$ can achieve neither the value $\sqrt \e$ nor the value $2-\sqrt \e$ on $B_{R_\e+(\tau\e)^{-1/2}}^c\times [0,1/\sqrt\e]$, so long as $\e $ is sufficiently small.  As such, if $T_\e < 1/\sqrt\e$, the smoothness of $p_\e^+$ and $p_\e^-$ guarantees the existence of $x_\e \in B_{R_\e + (\tau\e)^{-1/2}}$.
\smallskip

First we consider the case that $p^+_\e(x_\e,T_\e) = \sqrt \e$.  Using that $p^+_\e$ is smooth and  $(x_\e,T_\e)$ is the location of a minimum of $p^+_\e$ on $\R\times (0,T_\e]$, we see that, at $(x_\e,T_\e)$,
\[
	(p^+_\e)_t + \frac{1}{\sqrt\tau} (p^+_\e)_x
		\leq 0.
\]
From this, along with~\eqref{eq:kinetic_system1} and the definition of $p^+_\e$ in terms of $p^+$, we get that, at $(x_\e,T_\e)$,
\begin{equation}\label{eq:p1}
	\e + \frac{1}{\tau} \left(\rho - p^+\right) + \rho(1-\rho)
		= (p^+_\e)_t + \frac{1}{\sqrt\tau} (p^+_\e)_x
		\leq 0.
\end{equation}
The definition of $T_\e$ yields  that $\sqrt \e \leq p^-_\e(x_\e,T_\e) \leq 2 - \sqrt \e$.  Using these bounds for $p^-_\e$, the facts that $p^+_\e(x_\e,T_\e) = \sqrt \e$ and $\e T_\e < \sqrt \e$, and the relationship between $p^\pm_\e$ and $p^\pm$, we obtain
\[
	0
		\leq\rho(x_\e,T_\e)
		\leq 1.
\]
Also, since $p^+_\e(x_\e,T_\e) \leq p^-_\e(x_\e,T_\e)$, it follows that $p^+(x_\e,T_\e) \leq \rho(x_\e,T_\e)$.  Hence, at $(x_\e,T_\e)$,
\[
	\frac{1}{\tau} \left(\rho - p^+\right) + \rho(1-\rho) \geq 0.
\]
The combination of this inequality with the positivity of $\e$ contradicts~\eqref{eq:p1}, finishing this case.
\smallskip

Now we consider the case that $p^+_\e(x_\e,T_\e) = 2-\sqrt \e$.  Reasoning as above, we see that, at $(x_\e,T_\e)$,
\[
	0 \leq (p^+_\e)_t + \frac{1}{\sqrt \tau}(p^+_\e)_x
		= \e + \frac{1}{\tau}\left(\rho-p^+\right) + \rho(1-\rho).
\]
Moving the $\e$ and $p^+$ terms from the right hand side to the left hand side and using that $p^+ = p^+_\e -\e T_\e$ and  $T_\e < 1/\sqrt\e$, we obtain
\begin{equation}\label{eq:p2}
	\frac{2}{\tau} - \frac{\sqrt \e\left(2 - \tau \sqrt \e\right)}{\tau}	
		= \frac{1}{\tau}\left(2 - 2\sqrt \e\right) - \e	
		\leq \frac{1}{\tau}\left(p^+_\e - \e T_\e\right) - \e
		\leq\frac{1}{\tau}\rho + \rho(1-\rho).
\end{equation}
\smallskip

On the other hand the definition of $T_\e$ implies  that $\sqrt \e \leq p^-_\e(x_\e,T_\e) \leq 2-\sqrt \e$.  From this, we see that
\[
	1 - \sqrt \e
		\leq \frac{\left(p^+_\e(x_\e,T_\e) - \e T_\e\right) + \left(p^-_\e(x_\e,T_\e) - T_\e\right)}{2}
		= \rho(x_\e,T_\e)
		< \frac{p^+_\e(x_\e,T_\e) + p^-_\e(x_\e,T_\e)}{2}
		\leq 2 - \sqrt \e, 
\]
or, more succinctly,
\begin{equation}\label{eq:p3}
	1 - \sqrt \e
		\leq \rho(x_\e,T_\e)
		< 2 - \sqrt \e.
\end{equation}

When $\e$ is sufficiently small, depending only on $\tau$, the two inequalities~\eqref{eq:p2} and~\eqref{eq:p3} are incompatible.  Briefly, this may be seen by considering two cases.  First, if $\rho \in[1-\e, 3/2]$, we obtain
\[
	\frac{2}{\tau} - \frac{\sqrt \e\left(2 - \tau \sqrt \e\right)}{\tau}	
		\leq\frac{1}{\tau}\rho + \rho(1-\rho)
		\leq \frac{3}{2\tau} + \frac{3}{2} \sqrt \e.
\]
from~\eqref{eq:p2}.  This is clearly a contradiction when $\e$ is sufficiently small.  Second, if $\rho\in [3/2, 2-\sqrt \e]$, we find from~\eqref{eq:p2}
\[
	\frac{2}{\tau} - \frac{\sqrt \e\left(2 - \tau \sqrt \e\right)}{\tau}	
		\leq\frac{1}{\tau}\rho + \rho(1-\rho)
		\leq \frac{2-\sqrt \e}{\tau} + \frac{3}{2}\left(1 - \frac{3}{2}\right),
\]
which is also a contradiction for $\e$ is sufficiently small.
\smallskip

This finishes the consideration of the second case and thus finishes the proof.
\end{proof}

\subsection*{The reactive-telegraph equation: proof of~\Cref{thm:non_positive_telegraph} when $n \geq 2$}

We need the following estimate: 
\begin{theorem}[Chapter 1, Theorem 4.1 \cite{Sogge}]\label{thm:hyperbolic_regularity}
Let $F_{\rm wave}: \R^2 \to \R$ be a smooth function such that $F_{\rm wave}(0,0) = 0$.  Assume that $2s \geq n + 2$, $f \in H^{s+1}(\R^n)$, and $g \in H^s(\R^n)$.  Then there exists $T>0$, depending on $F_{\rm wave}$, $\|f\|_{H^{s+1}}$, and $\|g\|_{H^s}$, such that   the initial value problem  
\[
	\begin{cases}
		u_{tt} = \Delta u + F_{\rm wave}(u,u_t) \ \ &\text{ in }  \ \ \R^n \times (0,T),\\
		u = f,\ \ \ u_t = g  \ \ &\text{ on } \ \ \R^n \times \{0\},
	\end{cases}
\]
has a unique solution $u$ satisfying, for some universal constant $C$, 
\[
	\|u\|_{H^{s+1}(\R^n\times [0,T])}\leq C\left( \|f\|_{H^{s+1}(\R^n)} + \|g\|_{H^s(\R^n)} \right).
\]
\end{theorem}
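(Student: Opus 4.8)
The plan is to prove this as a packaging of the classical energy-method and fixed-point argument for semilinear wave equations, so that nothing here is specific to our setting. The first ingredient is the linear energy estimate: if $w$ solves $w_{tt} = \Delta w + h$ on $\R^n\times(0,T)$ with $w(\cdot,0)=f$, $w_t(\cdot,0)=g$, then for every integer $k\le s$,
\[
	\sup_{t\in[0,T]}\Big(\|w(\cdot,t)\|_{H^{k+1}} + \|w_t(\cdot,t)\|_{H^k}\Big) \le C\Big(\|f\|_{H^{k+1}} + \|g\|_{H^k} + \int_0^T \|h(\cdot,t)\|_{H^k}\,dt\Big).
\]
This follows by applying $(1-\Delta)^{k/2}$ to the equation, pairing with $(1-\Delta)^{k/2}w_t$, integrating by parts to produce $\tfrac{d}{dt}\big(\|(1-\Delta)^{k/2}w_t\|_{L^2}^2 + \|(1-\Delta)^{(k+1)/2}w\|_{L^2}^2\big)$, and invoking Grönwall.

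Next I would set up the contraction. Since $2s\ge n+2$ we have $s>n/2$, so $H^s(\R^n)$ embeds into $L^\infty$ and is a Banach algebra; for smooth $F$ with $F(0,0)=0$ this yields the Moser-type estimate $\|F(u,u_t)\|_{H^s}\le C\big(\|(u,u_t)\|_{L^\infty}\big)\big(\|u\|_{H^s}+\|u_t\|_{H^s}\big)$ together with a local Lipschitz bound for $F(u,u_t)-F(\tilde u,\tilde u_t)$ in $H^s$ on bounded sets. Working in $X_T:=C([0,T];H^{s+1})\cap C^1([0,T];H^s)$, let $\Phi$ map $v\in X_T$ to the solution $u$ of the linear problem $u_{tt}=\Delta u + F(v,v_t)$ with data $(f,g)$. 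Choosing $R$ a fixed multiple of $\|f\|_{H^{s+1}}+\|g\|_{H^s}$ and then $T$ small (depending on $R$, hence on the data and on $F$), the linear estimate with $k=s$ shows that $\Phi$ maps the ball $B_R\subset X_T$ into itself; applied to the difference $\Phi(v)-\Phi(\tilde v)$, which solves the same equation with source $F(v,v_t)-F(\tilde v,\tilde v_t)$ and zero data, it gives $\|\Phi(v)-\Phi(\tilde v)\|_{X_T}\le CT L\,\|v-\tilde v\|_{X_T}$, a contraction after shrinking $T$ once more. The fixed point is the desired solution, and uniqueness follows from the same energy estimate applied to the difference of two solutions plus Grönwall.

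Finally, the space-time bound $\|u\|_{H^{s+1}(\R^n\times[0,T])}\le C(\|f\|_{H^{s+1}}+\|g\|_{H^s})$ is obtained by converting the time-uniform bounds into integrated ones: $u\in L^\infty_tH^{s+1}_x$, $u_t\in L^\infty_tH^s_x$, and $u_{tt}=\Delta u+F(u,u_t)\in L^\infty_tH^{s-1}_x$, and then one bootstraps in $t$ using the equation (each further time derivative lowers spatial regularity by one) to see that every mixed space-time derivative of $u$ of total order at most $s+1$ lies in $L^2(\R^n\times[0,T])$ with the stated bound; here the condition $2s\ge n+2$, i.e.\ $s-1\ge n/2$, is exactly what keeps the products arising from differentiating $F(u,u_t)$ under control.

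The step I expect to be the main obstacle is the nonlinear estimate combined with the matching of regularities: because $F$ depends on $u_t$, which is only in $H^s$ while $u\in H^{s+1}$, the source $F(u,u_t)$ sits in $H^s$ --- precisely the regularity for which the linear estimate returns a solution in $H^{s+1}$ with $u_t\in H^s$, so the scheme closes. Tracking this balance through the Moser and product estimates, and in particular through the bootstrap for the space-time norm, is the delicate part and is what forces the hypothesis $2s\ge n+2$; everything else is routine once the linear energy estimate and the algebra and composition estimates for $H^s$ are in place.
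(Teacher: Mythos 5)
This statement is quoted from Sogge [Chapter~1, Theorem~4.1] and the paper gives no proof of it, so there is no internal argument to compare against. Your outline --- the linear energy estimate for the inhomogeneous wave equation, Moser/algebra estimates in $H^s$ for $s>n/2$, a Picard/contraction argument in $C([0,T];H^{s+1})\cap C^1([0,T];H^s)$, and then trading time derivatives for spatial ones through the equation to convert the time-uniform bounds into the space-time $H^{s+1}(\R^n\times[0,T])$ bound --- is the standard local well-posedness argument and is essentially the proof in the cited reference; it is correct in outline, and you have correctly identified where the hypothesis $2s\ge n+2$ enters. Two minor caveats, neither of which affects the substance: the constant $C$ your argument produces is not literally universal (it inherits dependence on $F_{\rm wave}$ and on the size of the data through the Moser constants and through $T$), which is in fact how the estimate is used later in the paper; and since the theorem is applied with non-integer $s$ ($s=5/2$ and $s=7/2$), the final bootstrap for the mixed space-time norm should be phrased via the fractional chain rule and interpolation rather than the integer-order derivative counting you describe.
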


With this in hand, we now show that the reactive-telegraph equation~\eqref{eq:reactive_telegraph} does not preserve positivity.

\begin{proof}[Proof of \Cref{thm:non_positive_telegraph} when $n\geq 2$]
It is enough to work with $n=2$,  since any example in this setting also works for $n>2$.
\smallskip

Fix $\rho_0(x) = \epsilon e^{-|x|^2/\delta}$ for $\epsilon, \delta\in(0,1)$ so that   $\e \leq \delta^{5/4}$ and let   $\rho$ be the solution to 
\beq\label{takis300}
\begin{cases}	\tau \rho_{tt} + (1 - \tau + 2\tau\rho) \rho_t = \Delta\rho + \rho(1-\rho) \ \ \text{in} \ \ \R^2\times (0,T),\\[1mm]
\rho(\cdot, 0) = \rho_0 \ \ \text{and}  \ \ \rho_t(\cdot,0) = 0 \ \ \text{on} \ \ \R^2,\end{cases}
\eeq
where $T>0$ gives the time interval of existence, which we obtain below.
\smallskip

Next we change variables to transform \eqref{takis300}  to a wave equation with a right hand side which we can analyze using Theorem~\ref{thm:hyperbolic_regularity}. 
\smallskip

It is immediate that  $u(x,t) = e^{(1 - \tau) t/2\tau} \rho(x,t)$ satisfies 
\[
	\tau u_{tt} - \Delta u = u\left( 1 - \frac{(1-\tau)^2}{4\tau^2} + 2\tau e^{-(1-\tau)t/2\tau} u_t - e^{-(1-\tau) t/2\tau}u\right).
\]
Letting $v(x,t) = u(x,\sqrt{\tau}t)$, we find 
\[\begin{cases}
	v_{tt} - \Delta v = v\left( 1 - \frac{(1-\tau)^2}{4\tau^2} + 2\sqrt{\tau} e^{-(1-\tau)t/2\tau} v_t - e^{-(1-\tau) t/2\tau}v\right) \ \ \text{ on } \ \ \R^2\times (0,T),\\
	v(\cdot,0) = \rho_0 \ \ \text{and} \ \  v_t (\cdot,0)= \frac{1-\tau}{2\sqrt{\tau}} \rho_0 \ \  \text{ on } \ \  \R^2. 
\end{cases}\]
\smallskip

Using  \Cref{thm:hyperbolic_regularity} with  $s = 7/2$ and $s =5/2$ respectively, we find $T > 0$ and $C_{\tau}$, which is independent of $T$ and will change from line to line below,  such that 
\begin{equation}\label{eq:wave1}
	\|v_t\|_{L^\infty((0,T)\times \R^2)}
		\leq C_{\rm Sob} \|v\|_{H^{7/2}((0,T)\times \R^2)}
		\leq C_\tau \|\rho_0\|_{H^{7/2}(\R^2)}
		\leq C_\tau \e \delta^{-5/4}
\end{equation}
and
\begin{equation}\label{eq:wave2}
	\|v\|_{L^\infty((0,T)\times \R^2)}
		\leq C_{\rm Sob} \|v\|_{H^{5/2}((0,T)\times\R^2)}
		\leq C_{\tau} \|\rho_0\|_{H^{5/2}(\R^2))}
		\leq C_{\tau} \epsilon \delta^{-3/4};
\end{equation}
note that in the second inequality we absorbed $C_{\rm Sob}$ into $C_{\tau}$ to simplify the notation.  
\smallskip

Since $\e \leq \delta^{5/4}$, it follows that, for some $C_\tau$ depending only on $\tau$, 
$$\|\rho_0\|_{H^{7/2}} \leq C_\tau.$$
In view of this and~\Cref{thm:hyperbolic_regularity}, it follows that $T$ does not depend on $\e$ or $\delta$. 
\smallskip


Duhamel's formula for the solution to the wave equation for $n=2$ gives
\begin{equation}\label{takis301}
\begin{split}
	&v(x,t) = \frac{1}{2}\fint_{B_t(x)} \frac{1}{\sqrt{t^2 - |x-y|^2}}\left[t^2\frac{1-\tau}{2\sqrt{n\tau}} \rho_0(y) + t\rho_0(y) + tD\rho_0(y) \cdot(y-x)\right] dy\\
		&\quad + \int_0^t s^2\fint_{B_s(x)} \frac{v}{\sqrt{s^2 - |x-y|^2}}\left( 1 - \frac{(1-\tau)^2}{4\tau^2} + 2\sqrt{\tau} e^{-(1-\tau)t/2\tau} v_t - e^{-(1-\tau) t/2\tau}v\right)dyds.
\end{split}
\end{equation}
At this point we mention that this example does not work in one dimension because the general form of the solution is different in one dimension.  In particular, the crucial term $D\rho_0$ does not appear.
\smallskip

Inserting the bounds~\eqref{eq:wave1} and \eqref{eq:wave2} in \eqref{takis301},  we find 
\[
	v(0,t)
		\leq \epsilon\left(e^{-t^2/\delta}\left[ t \frac{|1- \tau|}{2\sqrt{\tau}} + 1 - 2\frac{t^2}{\delta}\right] + C_\tau \delta^{-3/4} t^2\left[1 + \epsilon \delta^{-5/4}\right]\right),
\]
which, for $\delta = t^2$,  becomes
\[
	v(0,t)
		\leq \epsilon\left( e^{-1}t \frac{|1- \tau|}{2\sqrt{\tau}} - 2e^{-1} + C_{\tau} t^{1/2} + \epsilon C_{\tau} t^{-2}\right).
\]
The claim follows by choosing first $t$ and then $\epsilon$ small to obtain $v(0,t) < 0$, and, hence,    that $\rho(0,t/\sqrt\tau) < 0$. 
\end{proof}

\subsection*{A kinetic model that does not preserve positivity}\label{sec:non_positivity_kinetic}

In the interest of simplicity, we study here  a slightly different model than~\eqref{eq:the_prelim_equation} and somewhat singular initial data. Generalizations to a larger class of equations and initial data are conceptually straightforward, though with significantly more involved computations.  
\smallskip

We consider solutions to a two-dimensional discrete version of~\eqref{eq:the_prelim_equation} given by
\begin{equation}\label{eq:discrete_kinetic}
	\begin{cases}
		p_t + a_{2,\tau} v \cdot D p = \frac{1}{\tau} \left(\rho - p\right) + p(1-p)  \ \  \text{ in } \ \ \R^2 \times S^1_d \times \R_+,\\[1mm]
		p(\cdot,0) = p_0 \ \  \text{ on } \ \ \R^2 \times S^1_d,
	\end{cases}
\end{equation}
where $S^1_d := \{e_1, e_2, -e_1, -e_2\}$ with $e_1, e_2$ the standard basis vectors of $\R^2$ and 
$$\rho(x,t) := \frac{1}{4}\left(\sum_{v \in S^1_d} p(x,v,t)\right).$$
We have:

\begin{prop}\label{prop:kinetic_negative}
	Fix $\tau > 4$.  There exist a bounded $p_0$ with $\left(\sum_{v\in S^1_d} p_0\right)/4 \leq 1$ and $(x_0,v_0,t_0) \in \R^2 \times S^1_d \times \R_+$ such that,  
if $p$ solves ~\eqref{eq:discrete_kinetic} with initial datum $p_0$,  then  $p(x_0,v_0,t_0) < 0$.
\end{prop}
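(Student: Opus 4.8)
The plan is to exhibit explicit, piecewise constant (hence ``somewhat singular'') initial data $p_0$, concentrated in a single velocity, for which the Duhamel representation of the solution is forced to take a negative value at a point lying on the trailing side of a moving discontinuity. The structural point is that the reaction $p(1-p)$ is strongly negative wherever $p>1$ --- in sharp contrast with $\rho(1-\rho)_+$, which is never negative and is precisely what makes \Cref{lem:a_priori} work --- and that, unlike for continuous data, a bounded but discontinuous $p_0$ produces a solution that is discontinuous in $t$ at fixed $x$ (as the characteristic foot $x - a_{2,\tau} v t$ sweeps across a jump of $p_0$), so the usual ``first crossing time'' argument for preservation of positivity breaks down. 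The hypothesis $\tau > 4$ is used to make the competition between this depletion and the positivity--restoring effects (relaxation toward the nonnegative average $\rho$, and transport) quantitative.

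Concretely, I would take $p_0(x, e_1) = 4\cdot \mathbf 1_{E}(x)$ for a simple set $E\subset\R^2$ (a half--plane or a slab) and $p_0(x, v) = 0$ for $v \in \{-e_1, e_2, -e_2\}$; then $p_0$ is bounded and $\rho_0 = \frac14\sum_{v\in S^1_d} p_0 = \mathbf 1_E \le 1$, as required. The steps are: (1) rewrite~\eqref{eq:discrete_kinetic} as a coupled system of four scalar transport equations and record the mild formulation --- along the $v$--characteristic $X(s) = x - a_{2,\tau} v(t-s)$, the component $p(\cdot,v,\cdot)$ equals the transported datum $p_0(X(0),v)$ plus $\int_0^t [\tfrac1\tau(\rho - p) + p(1-p)]$ evaluated on the characteristic; (2) since three of the four velocities carry zero data, use a Gr\"onwall argument on the short time interval of interest to bound the three ``secondary'' components by $O(1/\tau)$ from above (they are fed only through the nonnegative source $\tfrac1\tau\rho$), so that $\rho$ is essentially $\tfrac14 p(\cdot,e_1,\cdot)$ up to controllable error; (3) localize to the $e_1$--component, reduce it along characteristics to a perturbed scalar ODE $\dot P = \tfrac1\tau(\tfrac14 P - P) + P(1-P) + o(1)$, and compute the effect of the characteristic foot crossing $\partial E$: at a point $(x_0, e_1, t_0)$ chosen just past the discontinuity surface, the transported datum contributes $0$ while the accumulated right--hand side is strictly negative because the reaction term, driven by the values of order $4$ that $P$ inherits on the other side of the jump, overwhelms both the $\tfrac1\tau\rho \ge 0$ contribution and the error, precisely when $\tau > 4$; (4) conclude that $p(x_0, e_1, t_0) < 0$ and check the constraints on $p_0$.

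The main obstacle is Step~(3): making the Duhamel bookkeeping across the discontinuity of $p_0$ rigorous, in particular tracking how the nonlocal average $\rho$ and the three secondary components re--enter along the characteristic, and pinning down the exact role of the threshold $\tau > 4$. Keeping $p_0$ piecewise constant and $E$ as simple as possible --- so that every characteristic, every foot, and the whole discontinuity set are explicit --- is what renders these estimates manageable; this is exactly the trade--off the paper alludes to when it says that generalizations to smoother data and broader classes of equations are conceptually straightforward but computationally heavier.
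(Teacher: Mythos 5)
There is a genuine gap, and it starts with a misreading of which nonlinearity is at work. Although the display \eqref{eq:discrete_kinetic} shows $p(1-p)$, the model being tested here (see the footnote in the introduction and the computation leading to \eqref{eq:kinetic_negative_y_equation}) has the \emph{nonlocal} logistic term $\rho(1-\rho)$. This is not cosmetic: with the local reaction $p(1-p)$ the proposition you are trying to prove is actually \emph{false}. Along the $v$-characteristic each component satisfies $\dot P_v = \tfrac1\tau(\rho - P_v) + P_v(1-P_v)$, whose right-hand side equals $\tfrac1\tau\rho \ge 0$ whenever $P_v = 0$ and all components are nonnegative; the system is quasi-positive and preserves nonnegativity, discontinuous data notwithstanding (the jump is simply transported along characteristics and does not defeat the crossing argument — your claim that discontinuity of $p_0$ breaks it is incorrect). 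Your Step (3) also misidentifies where the Duhamel integral gets its contributions: the backward characteristic from a point $(x_0,e_1,t_0)$ lying just past the discontinuity has its foot \emph{outside} $E$, so $P(0)=0$ and the values of order $4$ on the far side of the jump never enter that integral; there is no mechanism in your construction forcing the accumulated source to be negative.

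The idea you are missing is that negativity must be produced through the nonlocal average. The component that goes negative is one that starts identically zero (in the paper, the $e_2$-component), for which the source along its characteristic is $\tfrac1\tau\rho + \rho(1-\rho) = \rho\bigl(1+\tfrac1\tau-\rho\bigr)$; this is negative precisely when $\rho > 1+\tfrac1\tau$. Since $\rho_0\le 1$ is required, you must arrange for $\rho$ to \emph{exceed} $1$ at a later time, which a single patch in a single velocity cannot do (it only translates and spreads). The paper therefore places three patches, each at the quasi-equilibrium amplitude $4(1-3/\tau)$, in three different velocities on initially disjoint cones that collide after a short time; at the collision point $\rho \approx 3(1-3/\tau) > 1 + \tfrac1\tau$ for $\tau$ large enough, and the vanishing $e_2$-component is then driven strictly negative. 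This collision of several velocity channels, together with the quasi-equilibrium normalization of the amplitudes, is exactly where the lower bound on $\tau$ enters; your proposal contains neither ingredient, so the construction as described cannot be completed.
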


The idea is to choose $p_0$ having  three patches, one moving in the  $-e_1$ direction, one moving in the $e_2$ direction, and another moving in the  $e_1$ direction, which will eventually collide.  Their sum will then be large enough to force the reaction term to be negative, which will cause the population density of the species moving in the $e_2$ direction to become negative.


\begin{proof}[Proof of \Cref{prop:kinetic_negative}]

To choose $p_0$, we introduce the (moving) sets
\[\begin{cases}
	X_{1,t} = \{(x_1,x_2) \in \R^2: |x_2| < -x_1+a_{2,\tau}t\},   \quad   X_{2,t} = \{(x_1,x_2) \in \R^2: |x_1| < x_2 + a_{2,\tau}t\}, \quad \text{and} \\[1.5mm]
	X_{3,t} = \{(x_1,x_2) \in \R^2: |x_2| < x_1 + a_{2,\tau}t\},
\end{cases}\]
which, when $t=0$,  are disjoint cones with a vertex at the origin in the directions $-e_1$, $e_2$, and $e_1$, respectively,  while, for $t>0$, they move in time in the directions $e_1$, $-e_2$, and $-e_1$, respectively, with  speed $a_{2,\tau}$.
\smallskip

Let $p_0$ be so that 
\[\begin{cases}
	p_0(x,e_2) \equiv 0, \quad  \ p_0(x,e_1) = 4\left(1 - \frac{3}{\tau}\right) \1_{X_{1,0}}(x),\\[1.5mm] 
	p_0(x,-e_2) = 4\left(1 - \frac{3}{\tau}\right)\1_{X_{2,0}}(x),  \quad \text{and} \quad  
	p_0(x,-e_1) = 4\left(1 - \frac{3}{\tau}\right)\1_{X_{3,0}}(x).
\end{cases}\]
The constant $4(1-3/\tau)$ is chosen so that $p_0$ is in a quasi-equilibrium,  that is, for any fixed $e \in S^1_d$, the right hand side of~\eqref{eq:discrete_kinetic} is zero at $t=0$. 
\smallskip

A simple computation shows that, in view of the choice  of the sets $X_{i,0}$, $$0\leq \rho(\cdot,0) \leq 1 \ \ \text{in} \ \ \R^2.$$ 
To simplify the notation, we introduce 
\[
	\overline p(x,t) := e^{-t/\tau} p(x + e_2 a_{2,\tau} t,e_2,t),
\]
which satisfies 
\begin{equation}\label{eq:kinetic_negative_y_equation}
	\overline p_t = \frac{1}{\tau}\overline \rho + \overline \rho(1-\overline \rho)  \ \  \text{ in } \ \ \R^2 \times S^1_d \times \R_+,
\end{equation}
with  $\overline \rho(x,t) = \rho(x + e_2 a_{2,\tau}t,t)$.
For future reference note that the right hand side of~\eqref{eq:kinetic_negative_y_equation}  is decreasing in $\overline \rho$ when $\tau > 1$ and $\overline \rho \geq (\tau+1)/2\tau$. 
\smallskip

Fix $\epsilon>0$.  
It is easy to see, from equation~\eqref{eq:discrete_kinetic},  that there exists $T_\epsilon$, depending only on $\epsilon$, such that, for $t \in (0,T_\epsilon)$,
\begin{equation}\label{chris40}
\begin{split}
	4\left(1 - \frac{3}{\tau}\right)\1_{X_{1,t}}(x) &\geq p(x, e_1,t) \geq 4(1-\epsilon)\left(1 - \frac{3}{\tau}\right)\1_{X_{1,t}}(x),\\
	 4\left(1 - \frac{3}{\tau}\right)\1_{X_{2,t}}(x) &\geq p_0(x,-e_2) \geq 4(1-\e)\left(1 - \frac{3}{\tau}\right)\1_{X_{2,t}}(x), \qquad \text{ and }\\
	4\left(1 - \frac{3}{\tau}\right)\1_{X_{3,t}}(x) &\geq p(x, -e_1,t) \geq 4(1-\epsilon)\left(1 - \frac{3}{\tau}\right)\1_{X_{3,t}}(x).
\end{split}
\end{equation}
Fix $\delta \in (0, 2T_\e a_{2,\tau})$.  We point out that the domains of the indicator functions in~\eqref{chris40} are initially disjoint but contain $(0,-\delta)$ when $t>\delta/2a_{2,\tau}$.    Then~\eqref{chris40} yields, for all $t \in (\delta/2a_{2,\tau},T_\e)$,
\begin{equation}\label{chris41}
	\overline \rho((0,-\delta),t) \geq 3(1-\e)\left(1 - \frac{3}{\tau}\right),
\end{equation}
and, hence, for $\e$ small enough and $t > \delta/2a_{2,\tau}$ and since $\tau > 4$, 
\begin{equation}\label{chris101}
	\overline \rho((0,-\delta),t) \geq (\tau+1)/2\tau.
\end{equation}

Using~\eqref{chris101}, the lower bound in~\eqref{chris41}, and the fact that $\tau > 4$  and choosing an even smaller $\e$ in \eqref{eq:kinetic_negative_y_equation}, we obtain, for all $t \in (\delta/2a_{2,\tau},T_\e)$,
\begin{equation}\label{chris300}
\begin{split}
	\overline p_t((0,-\delta),t)
		&= \frac{1}{\tau}\overline \rho((0,-\delta),t) +\overline \rho((0,-\delta),t)(1-\overline \rho((0,-\delta),t))\\
		&\leq \frac{3(1 - \e)}{\tau}\left(1 - \frac{3}{\tau}\right) + 3(1 - \e)\left(1 - \frac{3}{\tau}\right)\left(1 - 3(1-\e) \left(1 - \frac{3}{\tau}\right) \right)\\
		&= 3(1-\e) \left(1 - \frac{3}{\tau}\right) \left(1 + \frac{1}{\tau} - 3(1-\e)\left(1 - \frac{3}{\tau}\right)\right)
		< 0.
\end{split}
\end{equation}
By a similar, though simpler, computation, $\overline p_t((0,-\delta),t)= 0$ when $t\in [0,\delta/2a_{2,\tau}]$.  Using this,~\eqref{chris300}, and that $\overline p((0,-\delta),0)=0$,  we see that, for all $t \in(\delta/2a_{2,\tau},T_\e)$,
\[\begin{split}
	p\left(\left(a_{2,\tau}t - \delta\right) e_2,e_2,t\right)
		= \overline p((0,-\delta),t)
		< 0,
\end{split}\]
and the proof is now complete.
\end{proof}
%
%

\appendix

\numberwithin{equation}{section}

\section{Appendix: The integrals~\eqref{eq:Phi} and~\eqref{eq:v_squared}}\label{appendix}

We compute the  three simple integrals used above.

\subsection*{The integral~\eqref{eq:Phi}}
When $n=1$, this computation is simple, since  it reduces to a sum.  On the other hand, when $n > 3$, the expression becomes more complicated.  As such, we omit these cases.  Here, 
for $s > 1$, $w \in S^{n-1}$, and $n =2,3$, we compute the integral
\begin{equation}\label{chris102}
	\Phi(s) = \fint_{S^{n-1}} \frac{dv}{s + v\cdot w}.
\end{equation}
%
%
When $n=2$, changing variables we may assume that $w = (|w|,0)$.  This, along with using spherical coordinates, yields the integral
\[
	\Phi(s) = \fint_{S^{1}} \frac{dv}{s + v\cdot w}
		= \frac{1}{\pi} \int_0^{\pi} \frac{d\theta}{s+ \cos(\theta)}.
\]
The substitution $r = \tan(\theta/2)$  then implies 
\[
	\Phi(s)
		= \frac{2}{\pi} \int_0^\infty \frac{dr}{2(1+r^2) + (1-r^2)}
		= \frac{2}{\pi} \int_0^\infty \frac{dr}{s+1 + (s - 1) r^2}
		= \frac{1}{\sqrt{ s^2 - 1}}.
\]
When $n=3$, again we change variables  so that $v\cdot w = v_1|w|$.  Then, using spherical coordinates,~\eqref{chris102} becomes 
\[
	\Phi(s) = \fint_{S^2} \frac{dv}{a + v\cdot w}
		= \frac{1}{2} \int_0^{\pi} \frac{\sin(\theta) d\theta}{a + |w| \cos(\theta)}.
\]
The substitution $r = \tan(\theta/2)$ yields 
\[\begin{split}
	\Phi(s)
		&= 2 \int_0^{\infty} \frac{r}{(2 + 1) + 2sr^2 +(s-1)r^4}dr
		= \frac{2}{s - 1} \int_0^{\infty} \frac{r}{(r^2 + 1)\left(r^2 + \frac{s + 1}{s - 1}\right)}dr\\
		&= \int_0^\infty \left[\frac{r}{r^2 + 1} - \frac{r}{r^2 +\frac{s+1}{s-1}} \right] dr
		= \frac{1}{2} \log\left( \frac{s + 1}{s-1}\right).
\end{split}\]

\subsection*{The convergence of the integral in~\eqref{chris8} to infinity}

We prove  that the integral term in~\eqref{chris8} tends to infinity as $\mu \to \mu_c^-$ where $\mu_c := (n-1)/2$.  
\smallskip
 
Following the work above, we have, for some normalizing factor $\omega_n$ depending only on  $n$,
\[
	\fint_{S^{n-1}} \frac{dv}{(1 + v_1)^\mu}
		= \frac{1}{\omega_n}\int_{S^{n-1}} \frac{\sin(\theta)^{n-2}}{(1 + \cos(\theta))^\mu}d\theta,
\]
and the change of variables $r = \tan(\theta/2)$ yields
\begin{equation}\label{eq:chris2000}
	\fint_{S^{n-1}} \frac{dv}{(1 + v_1)^\mu}
		= \frac{1}{\omega_n}\int_0^\infty \frac{\frac{(2r)^{n-2}}{(1 + r^2)^{n-2}}}{\left(1 + \frac{1-r^2}{1+r^2}\right)^\mu} \frac{2dr}{1+r^2}
		= \frac{2^{n-1 - \mu}}{\omega_n} \int_0^\infty \frac{r^{n-2}}{(1 + r^2)^{n-1 - \mu}}dr.
\end{equation}
This integral is finite only when 
$\mu < (n-1)/2$.  
When $\mu = 1$, the integral is finite if and only if $n > 3$.  Hence, $\Phi(1)< \infty$ if and only if $n > 3$.

\subsection*{The integral~\eqref{eq:v_squared}}

We show here that, for any vector $w \in S^{n-1}$,
\[
	\fint_{S^{n-1}} n(v\cdot w)^2 dv = 1. 
\]
The general result follows by scaling. Moreover, we  consider only the case $n > 2$, since,  for $n=1,2$ the formula is almost immediate.
\smallskip

Changing variables so that the term $v\cdot w= v_1$ gives,  
\[
	\fint_{S^{n-1}} (v\cdot w)^2 dv
		= \fint_{S^{n-1}} v_1^2 dv_1. 
\]
Using  polar coordinates, we obtain
\[\begin{split}
	\fint_{S^{n-1}} (v\cdot w)^2 dv
		&= \frac{1}{\int_0^\pi \sin(\theta)^{n-2} d\theta} \int_0^\pi \sin(\theta)^{n-2} \cos(\theta)^2 d\theta\\
		&= \frac{1}{\int_0^\pi \sin(\theta)^{n-2} d\theta} \int_0^\pi( \sin(\theta)^{n-2} - \sin(\theta)^n) d\theta
		=1 - \frac{\int_0^\pi \sin(\theta)^n d\theta}{\int_0^\pi \sin(\theta)^{n-2} d\theta}.
\end{split}\]
Denoting $I_n = \int_0^\pi \sin(\theta)^n d\theta$, which is sometimes referred to as the Wallis integral, we integrate by parts to obtain the recurrence relation
\[
	I_n = I_{n-2} - \int_0^\pi \sin(\theta)^{n-2} \cos(\theta)^2 d\theta
		= I_{n-2} - \frac{1}{n-1} I_n,
\]
which  implies that $I_n / I_{n-2} = \frac{n-1}{n}$, and, hence, 
\[
	\fint_{S^{n-1}} (v\cdot w)^2 dv
		= 1 - \frac{I_n}{I_{n-2}} = \frac{1}{n}.
\]

\bibliographystyle{abbrv}
\bibliography{refs}
\end{document}